\newtheorem{theorem}{Theorem}[section]
\newtheorem{lemma}{Lemma}[section]
\theoremstyle{remark}
\newtheorem{remark}{Remark}
\numberwithin{equation}{section}
\newcommand{\lb}{\left}
\newcommand{\rb}{\right}
\begin{document}
\title[On ``mixed'' modular equations of degree 21]
{On ``mixed'' modular equations of degree 21}
\author[S. Chandankumar]
{S. Chandankumar}
\address[S. Chandankumar]{Department of Mathematics, M. S. Ramaiah University of Applied Sciences, Peenya campus,
	Bengaluru - 560 058,
	Karnataka,India} \email{chandan.s17@gmail.comm }

\subjclass[2010]{33E05, 11F20}
\keywords{Modular equations, Theta--functions.}
\begin{abstract}
In the proposed work, we establish a total of six new $P$--$Q$ modular equations involving theta--function $f(-q)$ with moduli of orders 1, 3, 7 and 21. These equations can be regarded as modular identities in the alternate theory of signature 3. As a consequence, several values of quotients of theta--function are evaluated. 
\end{abstract}
\maketitle
\section{Introduction}
\noindent Throughout, we assume that $|q|<1$, Ramanujan's general theta--function $f(a,b)$ is defined by
\begin{equation}
f(a,b):=\sum_{n=-\infty}^{\infty}a^{n(n+1)/2}
b^{n(n-1)/2},\,\,\,|ab|<1.
\end{equation}
Furthermore following Ramanujan we define a case of $f(a,b)$:
\begin{equation}
f(-q):=f(-q,-q^2)=\sum_{n=-\infty}^{\infty}(-1)^nq^{n(3n-1)/2}.
\end{equation}
For  convenience throughout this paper, we set $f(-q^n)=f_n.$ The Gaussian hypergeometric function is defined by
$$_2F_1(a,b;c;z):=\sum_{n=0}^{\infty}\frac{\lb(a\rb)_n \lb(b\rb)_n}{\lb(c\rb)_n n!}z^n,\ \ \ 0\leq|z|<1,$$
where $a$, $b$, $c$ are complex numbers, $c\neq0,-1,-2,\ldots$, and $$(a)_0=1,\ \ (a)_n=a(a+1)\cdots(a+n-1)\ \ \textrm{for any positive integer}\ \ n.$$
Now we recall the notion of a ``mixed'' modular equation. Let $K(k)$ be the complete elliptic integral of the
first kind of modulus $k$. Recall that
\begin{equation}\label{ee11}
K(k):=\int_0^{\frac{\pi}{2}}\frac{d\phi}{\sqrt{1-k^2\sin^2\phi}}
=\frac{\pi}{2}\sum_{n=0}^{\infty}\frac{\left(\frac{1}{2}\right)^2_n}{\left(n!\right)^2}k^{2n},\,\,\,\,(0<k<1),
\end{equation}
and set $K'=K(k')$, where $k'=\sqrt{1-k^2}$  is called
complementary modulus of $k$. It is classical to set $q(k)=e^{-\pi
	K(k')/K(k)}$ so that $q$ is one-to-one and increases from 0 to 1.

Following Ramanujan we set $\alpha:=k^2$, and let $K$, $K'$, $L_1$, $L_1'$, $L_2$, $L_2'$, $L_3$ and $L_3'$ denote complete elliptic integrals of the first kind corresponding, in pairs, to the moduli $\sqrt{\alpha}$, $\sqrt{\beta}$, $\sqrt{\gamma}$ and $\sqrt{\delta}$, and their complementary moduli, respectively. Suppose that the equalities
\begin{equation}\label{ee14}
n_1\frac{K'}{K}=\frac{L_1'}{L_1},\ \
n_2\frac{K'}{K}=\frac{L_2'}{L_2} \ \ \textrm {and}\ \
n_3\frac{K'}{K}=\frac{L_3'}{L_3},
\end{equation}
hold for some positive integers $n_1$, $n_2$ and $n_3$. Then the relation between the moduli $\sqrt{\alpha}$, $\sqrt{\beta}$, $\sqrt{\gamma}$ and $\sqrt{\delta}$ that is induced by \eqref{ee14} is called as a ``mixed'' modular equation of composite degree $n_3=n_1n_2$.  We say that $\beta$, $\gamma$ and $\delta$ are of degrees $n_1$, $n_2$ and $n_3$ respectively over $\alpha$.  The multipliers $m=K/L_1$ and $m'=L_2/L_3$ are algebraic relations involving $\alpha$, $\beta$, $\gamma$ and $\delta$.

Ramanujan \cite{SR2} recorded total of seven $P$--$Q$ modular equations involving theta--function $f(-q)$ with moduli of orders 1, 3, 5 and 15. For example, he proved that 
\vspace{0.3cm}
\newline
If $P:=\dfrac{f(-q^3)f(-q^5)}{q^{1/3}f(-q)f(-q^{15})}$\ \ and \
\ $Q:=\dfrac{f(-q^6)f(-q^{10})}{q^{2/3}f(-q^2)f(-q^{30})}$, then

\begin{equation}\label{S26}
PQ+\frac{1}{PQ}=\lb(\frac{Q}{P}\rb)^3+\lb(\frac{P}{Q}\rb)^3+4.
\end{equation}

The proof of above equation by using classical methods can be found in \cite{BCB2}. In \cite{MSMCKSBH2} the authors have established new modular equations relating $P$ and $Q_r$, where
\begin{equation*}
P:=\frac{f^2(-q^3)}{q^{1/6}f(-q)f(-q^9)} \ \,\,\textrm{and}\,\,\ Q_{r}:=\frac{f^2(-q^{3r})}{q^{r/6}f(-q^r)f(-q^{9r})},
\end{equation*}
for {$r\in$ \{2, 3, 5, 7, 11, 13\} and using these modular relations they have explicitly evaluated several new cubic class invariants and cubic singular moduli. Also, recently in \cite{MSMCKSBH3} the authors have established new modular equations relating $A$ and $A_r$, where
	\begin{equation*}
	A:= \frac{f(-q) f(-q^{5})}{q^{1/2}f(-q^{3})f(-q^{15})}
	\,\,\,\,\ \textrm {and} \,\,\,\,\ A_{r}:= \frac{f(-q^r) f(-q^{5r})}{q^{r/2}f(-q^{3r})f(-q^{15r})},
	\end{equation*}
	for {$r\in$ \{3, 4, 5, 7, 9\} and explicitly evaluated few cubic singular moduli. For more details on $P$--$Q$ type modular equations one can refer \cite{MSMCKSHM1}, \cite{MSMCKSHM2} and \cite{MSMCKSMM}. 
	
	Motivated by all these works, we derive eight modular relations of composite degrees involving $f(-q)$ out of which six equations appears to be new to the literature. These new modular equations can also be identified as modular identities in the alternate theory of signature 3, which are also known as ``cubic modular equations''.

The paper is organized as follows. In Section \ref{Se2}, we collect some identities which are used in the subsequent sections. In Section \ref{Se3}, we establish several new $P$--$Q$ mixed modular equations which are akin to those recorded by Ramanujan in his notebooks. In section \ref{Se4}, we obtain various explicit evaluations of quotients of $f(-q),$ and $f(q)$.

\section{Preliminary results}\label{Se2}
In this section, we list some $P$--$Q$ modular equations given by Ramanujan which are helpful in proving our main results.
\begin{lemma}\cite[Ch. 25, Entry 68, p. 236]{BCB2}
	If $P:=\dfrac{f_1}{q^{1/4}f_7}$ \,\,\,\,\,and \,\,\,\,\,$Q:=\dfrac{f_3}{q^{3/4}f_{21}},$ \,\,\,\,then
	\begin{equation}\label{deg7}\\
		PQ+\frac{7}{PQ}=\left(\frac{Q}{P}\right)^2-3+\left(\frac{P}{Q}\right)^2.
	\end{equation}
\end{lemma}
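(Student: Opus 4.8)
The plan is to prove the modular equation \eqref{deg7} by expressing everything in terms of eta--function products and appealing to the theory of modular equations of degree $7$ (i.e. the Ramanujan modular equations relating $\alpha$ and a modulus $\beta$ of degree $7$), together with the known degree-$3$ ``cubic'' theory. Concretely, recall the standard parametrization
\begin{equation*}
f(-q)=\sqrt{z}\,2^{-1/6}\{\alpha(1-\alpha)/q\}^{1/24},
\end{equation*}
where $z={}_2F_1(1/2,1/2;1;\alpha)$ and $q=q(\alpha)$. Applying this with $q\mapsto q, q^3, q^7, q^{21}$ introduces moduli $\alpha,\beta,\gamma,\delta$ of degrees $1,3,7,21$, linked exactly by the relations \eqref{ee14} with $(n_1,n_2,n_3)=(3,7,21)$ (after the usual reindexing so that $\beta$ has degree $7$ and $\gamma$ degree $3$, whichever bookkeeping is cleanest). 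Under this substitution the quotients $P$ and $Q$ become explicit algebraic functions of $\alpha,\beta,\gamma,\delta$ and the multipliers $m=z_1/z_7$, $m'=z_3/z_{21}$; the powers of $q$ are absorbed precisely by the $\{\alpha(1-\alpha)/q\}^{1/24}$ factors, which is why the stated $P,Q$ are weight-zero and hence algebraic.

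The key steps, in order, are: first, substitute the eta--parametrization into $P$ and $Q$ to get $P^{24}$ and $Q^{24}$ as monomials in $\alpha,1-\alpha,\beta,1-\beta,\gamma,1-\gamma,\delta,1-\delta$ times ratios of the $z_j$; second, invoke the classical degree-$7$ modular equation in Ramanujan's form (e.g. $(\alpha\beta)^{1/8}+\{(1-\alpha)(1-\beta)\}^{1/8}=1$ and its companion for the multiplier) and the corresponding degree-$7$ relations between $\gamma$ and $\delta$, plus the degree-$3$ relations tying the pair $(\alpha,\gamma)$ and the pair $(\beta,\delta)$; third, use these to eliminate all but one parameter, reducing $P$, $Q$, $PQ$, and $P/Q$ to rational functions of a single uniformizer (say $\beta$, or a convenient $\alpha$--power such as $x=\{\alpha(1-\alpha)\}^{1/24}$-type quantity); fourth, verify that the proposed identity $PQ+7/(PQ)-(Q/P)^2-(P/Q)^2+3=0$ becomes the zero rational function. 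A cleaner packaging, and the one I would actually write up, is the $q$-series route: compute the $q$-expansions of $P$ and $Q$ from the product expansion $f(-q)=\prod_{n\ge1}(1-q^n)$ to enough order, form the putative relation, observe it is a modular function on $\Gamma_0(21)$ (or an appropriate congruence subgroup) that is holomorphic on the upper half-plane, and bound the order of its pole at the cusps; matching enough Fourier coefficients (a finite, explicitly bounded number) then forces it to vanish identically by the valence formula.

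The main obstacle is the elimination step / cusp-order bookkeeping: degree $21$ means $\Gamma_0(21)$ has several cusps and the functions $P$, $Q$ have fractional $q$-powers, so one must be careful that $PQ$, $Q/P$, $P/Q$ are genuinely single-valued modular functions (the exponents $1/4,3/4$ in $P$ and $Q$ are arranged so that $PQ$ has integer order and $Q/P$ as well — this needs checking) and then correctly compute the total pole order to know how many coefficients suffice. In the algebraic approach the analogous difficulty is that the mixed system in four moduli is genuinely overdetermined-looking, and one needs the right intermediate ``mixed'' modular equations of degree $21$ relating $\alpha$ and $\delta$ directly; deriving or citing those is the crux. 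I would therefore lean on the $q$-series/valence-formula argument for rigor and brevity, using the eta-parametrization only as a sanity check and to explain the ``signature 3'' interpretation. Everything downstream — clearing denominators, matching coefficients — is routine once the finite verification bound is pinned down, so I would state that bound explicitly and then simply assert the coefficient check.
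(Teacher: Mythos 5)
This lemma is not proved in the paper at all: it is quoted verbatim from Berndt's Ramanujan's Notebooks, Part IV (Ch.~25, Entry~68) and used only as an input to Lemma~\ref{lem0} and to the theorem giving \eqref{w7}. So there is no in-paper argument to measure your proposal against; the relevant comparison is with Berndt's published proof, which proceeds by classical manipulation of the modular equations of degrees $3$, $7$ and $21$ in the moduli $\alpha,\beta,\gamma,\delta$ and the multipliers --- essentially your first route. Your second route ($q$-expansions plus a valence-formula argument on $\Gamma_0(21)$) is also standard and would work. One point you flag as a worry is in fact a non-issue: writing $f(-q)=q^{-1/24}\eta(\tau)$ one finds $P=\eta(\tau)/\eta(7\tau)$ and $Q=\eta(3\tau)/\eta(21\tau)$, so the exponents $1/4$ and $3/4$ are exactly the eta-prefactors; $PQ$ and $P/Q$ are honest eta-quotients whose relevant powers are modular functions on $\Gamma_0(21)$ with poles and zeros only at the cusps, which is what makes the finite coefficient check legitimate.

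That said, as written your text is a plan rather than a proof. In the algebraic route you never produce the ``mixed'' relations needed to eliminate down to a single parameter --- and for composite degree $21$ those relations are precisely the nontrivial objects in question, so they cannot simply be cited without circularity (Entry~68 is itself one of them). In the analytic route you neither compute the cusp widths and pole orders on $\Gamma_0(21)$ (it has four cusps) nor state the resulting Sturm-type bound, and you explicitly say you would ``simply assert the coefficient check.'' Until that finite verification (or the elimination) is actually carried out, the argument is incomplete. The cleanest fixes are either to do what the paper does and cite Berndt, or to carry the valence-formula computation through explicitly: clear denominators in $PQ+7/(PQ)-(Q/P)^2-(P/Q)^2+3$, compute its order at each cusp of $\Gamma_0(21)$, and verify the $q$-expansion to the resulting explicit order.
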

\begin{lemma}\cite[Ch. 25, Entry 69, p. 236]{BCB2}
	If $P:=\dfrac{f_1}{q^{1/12}f_3}$ \,\,\,\,\,and \,\,\,\,\,$Q:=\dfrac{f_7}{q^{7/12}f_{21}},$ \,\,\,\,then
	\begin{equation}\label{deg3}\\
		(PQ)^3+\frac{27}{(PQ)^3}=\left(\frac{Q}{P}\right)^4-7\left(\frac{Q}{P}\right)^2+7\left(\frac{P}{Q}\right)^2-\left(\frac{P}{Q}\right)^4.
	\end{equation}
\end{lemma}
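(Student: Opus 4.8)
The plan is to pass from the theta--quotients to the signature--$3$ (``cubic'') parametrisation, where $P$ and $Q$ acquire closed forms, and then to recognise the asserted identity as a disguised version of a known cubic modular equation.

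First I would record transcendental evaluations of the two sides. Write $q=q_3(\alpha)$ for the signature--$3$ nome attached to $\alpha$, and let $a(q),b(q),c(q)$ be the Borwein cubic theta--functions, for which $b(q)=f_1^3/f_3$, $c(q)=3q^{1/3}f_3^3/f_1$, $a^3(q)=b^3(q)+c^3(q)$, and $b^3(q)=a^3(q)(1-\alpha)$, $c^3(q)=a^3(q)\alpha$. Dividing the last two relations and simplifying the powers of $q$ gives the key formula
\begin{equation*}
P^{12}=\frac{f_1^{12}}{q\,f_3^{12}}=\frac{27(1-\alpha)}{\alpha}.
\end{equation*}
Replacing $q$ by $q^7$ throughout --- so that $\alpha$ is replaced by $\gamma$, the modulus of degree $7$ over $\alpha$ in the theory of signature $3$ --- yields $Q^{12}=27(1-\gamma)/\gamma$ in exactly the same way. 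Thus, for $0<q<1$, both $P$ and $Q$ are positive and are pinned down by the single pair $(\alpha,\gamma)$.

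Next I would put the target identity in adapted variables. Setting $w=PQ$ and $u=P/Q$ and factoring the right--hand side, \eqref{deg3} is seen to be equivalent to
\begin{equation*}
w^{3}+\frac{27}{w^{3}}=\lb(\frac{1}{u^{2}}-u^{2}\rb)\lb(\frac{1}{u^{2}}+u^{2}-7\rb).
\end{equation*}
With $A=(1-\alpha)/\alpha$ and $C=(1-\gamma)/\gamma$ one computes $w^{12}=729\,AC$ and $u^{12}=A/C$, so on writing $D=(AC)^{1/4}$ and $B=(A/C)^{1/6}$ this collapses to
\begin{equation*}
3\sqrt{3}\,\lb(D+\frac{1}{D}\rb)=\lb(\frac{1}{B}-B\rb)\lb(B+\frac{1}{B}-7\rb).
\end{equation*}
Since $A=D^{2}B^{3}$ and $C=D^{2}B^{-3}$, this last display is a single algebraic relation between $\alpha$ and $\gamma$; the proof is completed by checking that it is precisely (a convenient form of) the modular equation of degree $7$ in the theory of signature $3$. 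Concretely I would take that cubic degree--$7$ modular equation in parametric form --- $\alpha$, $\gamma$, and hence $A,C,D,B$, as rational functions of one parameter, with the associated multiplier also rational in it --- substitute, and verify the displayed identity identically in the parameter.

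The hardest part will be this last step: it requires the degree--$7$ modular equation of signature $3$ in a form explicit enough to substitute, and the ensuing manipulation of surds and rational functions is lengthy; one must also confirm that the relevant roots are consistently the positive ones, which for $0<q<1$ is automatic and is reflected in the fact that both sides of \eqref{deg3} behave like $q^{-2}$ as $q\to0$. A less illuminating alternative dispenses with the cubic theory altogether: after clearing denominators, every term of \eqref{deg3} is an eta--quotient, hence a modular function on $\Gamma_{0}(21)$ holomorphic away from the cusps, so the identity follows once enough $q$--expansion coefficients are matched (a Sturm--type bound). Finally, the answer admits an internal cross--check against \eqref{deg7}: the quotient $f_3/(q^{1/6}f_7)$ equals both $f_1/(q^{1/4}f_7)$ divided by $P$ and $f_3/(q^{3/4}f_{21})$ divided by $Q$, which ties together the variables of the two lemmas.
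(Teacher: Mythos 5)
The paper itself does not prove this identity at all: it is Entry~69 of Chapter~25 of Ramanujan's second notebook, quoted verbatim with a citation to Berndt's Part~IV, where the proof is carried out. So you are supplying an argument where the author supplies only a reference; the comparison below is therefore with what your sketch would actually need to work.

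Your reduction is algebraically correct but makes no real progress: since $P^{12}=27(1-\alpha)/\alpha$ and $Q^{12}=27(1-\gamma)/\gamma$, your $D=(AC)^{1/4}$ is just $(PQ)^3/(3\sqrt3)$ and your $B=(A/C)^{1/6}$ is just $(P/Q)^2$, so the display $3\sqrt3\left(D+\tfrac1D\right)=\left(\tfrac1B-B\right)\left(B+\tfrac1B-7\right)$ is a renaming of \eqref{deg3}, not a simplification. The entire burden is thus deferred to ``check that this is the cubic modular equation of degree $7$,'' and the finishing move you propose for that --- substituting a one--parameter rational parametrization of $\alpha$ and $\gamma$ --- is not available. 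The relation between $\alpha(q)$ and $\alpha(q^7)$ in the signature--$3$ theory is equivalent to the relation between the Hauptmodul $t=(\eta(3\tau)/\eta(\tau))^{12}$ of $\Gamma_0(3)$ and $t(7\tau)$, and the curve it defines is birational to $X_0(21)$, which has genus $1$; no rational parametrization in a single parameter exists (unlike, say, the degree--$2$ or degree--$4$ cubic equations, where the relevant curves have genus $0$). There is also a circularity risk: the cubic degree--$7$ modular equation in the literature (Berndt--Bhargava--Garvan) is itself derived from Entries~68--69 of Chapter~25, i.e.\ from the very identity you are trying to prove.

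What does work is the route you relegate to an aside: after multiplying through by $(PQ)^3(P/Q)^4$, every term is an eta--quotient, hence a modular function on $\Gamma_0(21)$ holomorphic on the upper half--plane, and the identity follows from agreement of sufficiently many $q$--expansion coefficients (a Sturm--type bound, after checking orders at all four cusps). That is essentially how such entries are verified rigorously, and it is also the logic the paper itself uses elsewhere (``the factor vanishes for a sequence of points, hence identically''). If you want a proof rather than a citation, you should promote that argument to the main line and make the cusp analysis and the bound explicit; the cubic--theory detour as written does not close.
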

\begin{lemma}\cite[Ch. 25, Entry p. 204]{BCB2}
\text{If}\ \ $u:=\dfrac{f_1^2}{q^{1/6}f_3^2} $\ \ {and} \ \ $v:=\dfrac{f_2^2}{q^{1/3}f_6^2},$\ \ \  then
\begin{equation}\label{S21}
uv+\frac{9}{uv}=\lb(\frac{u}{v}\rb)^3+\lb(\frac{v}{u}\rb)^3.
\end{equation}
\end{lemma}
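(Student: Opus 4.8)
The plan is to translate \eqref{S21} into an algebraic relation among classical modular quantities and then settle it by a single-parameter computation. First, clearing denominators: multiplying \eqref{S21} through by $u^2v^2$ and rearranging shows it is equivalent to
\begin{equation*}
	u^6+v^6=u^4v^4+9\,u^2v^2,
\end{equation*}
that is, to $U^3+V^3=U^2V^2+9\,UV$ where $U:=u^2$ and $V:=v^2$. Next I would invoke Ramanujan's representation $f(-q)=\sqrt{z_\alpha}\,2^{-1/6}\{\alpha(1-\alpha)/q\}^{1/24}$, where $z_\alpha:={}_2F_1(\tfrac12,\tfrac12;1;\alpha)$, and write $z_\beta$, $z_\gamma$, $z_\delta$ for the same function evaluated at $\beta$, $\gamma$, $\delta$. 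Taking $\beta$, $\gamma$, $\delta$ of degrees $3$, $2$, $6$ over $\alpha$ in the sense of \eqref{ee14} --- so that they correspond to $q^3$, $q^2$, $q^6$ --- a direct substitution yields
\begin{equation*}
	u=\frac{f_1^2}{q^{1/6}f_3^2}=m\lb(\frac{\alpha(1-\alpha)}{\beta(1-\beta)}\rb)^{1/12},\qquad
	v=\frac{f_2^2}{q^{1/3}f_6^2}=m'\lb(\frac{\gamma(1-\gamma)}{\delta(1-\delta)}\rb)^{1/12},
\end{equation*}
with multipliers $m=z_\alpha/z_\beta$ and $m'=z_\gamma/z_\delta$, the latter being the multiplier of the degree-$3$ transformation $\gamma\to\delta$.

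The second step eliminates $\gamma$, $\delta$ and $m'$. As $\gamma$ and $\delta$ have degree $2$ over $\alpha$ and $\beta$ respectively, the descending Landen transformation gives $\sqrt{\gamma}=(1-\sqrt{1-\alpha})/(1+\sqrt{1-\alpha})$ and $\sqrt{\delta}=(1-\sqrt{1-\beta})/(1+\sqrt{1-\beta})$, together with $z_\gamma=\tfrac12\lb(1+\sqrt{1-\alpha}\rb)z_\alpha$ and $z_\delta=\tfrac12\lb(1+\sqrt{1-\beta}\rb)z_\beta$, whence $m'=m\,(1+\sqrt{1-\alpha})/(1+\sqrt{1-\beta})$. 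Substituting these relations and simplifying $\gamma(1-\gamma)$ and $\delta(1-\delta)$ writes $v$ --- and hence both $U$ and $V$ --- purely in terms of $\alpha$, $\beta$ and the single multiplier $m$.

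For the last step I would bring in the degree-$3$ modular equation $(\alpha\beta)^{1/4}+\{(1-\alpha)(1-\beta)\}^{1/4}=1$ relating $\alpha$ and $\beta$, together with the companion formulas for $m$ in terms of $\alpha$ and $\beta$, and then pass to a rational parametrization of this cubic modular equation; under it $U$ and $V$ become explicit rational functions of one parameter and $U^3+V^3-U^2V^2-9\,UV$ collapses to a rational function of that parameter which is seen to vanish identically. The hard part is precisely this reduction: one has to carry the twelfth-root exponents and the multiplier cleanly through the Landen step and then choose a parametrization of the cubic modular equation in which $U$ and $V$ come out \emph{simultaneously} rational of low degree --- after that the verification is mechanical. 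A parametrization-free alternative is to observe, directly from $f(-q)=q^{-1/24}\eta(\tau)$, that $u=\{\eta(\tau)/\eta(3\tau)\}^2$ and $v=\{\eta(2\tau)/\eta(6\tau)\}^2$; then $u^6$ and $v^6$ are the Hauptmodul $\{\eta(\tau)/\eta(3\tau)\}^{12}$ of $\Gamma_0(3)$ evaluated at $\tau$ and at $2\tau$, while suitable integral powers of $u^2v^2$ and $u^2/v^2$ are modular functions on $X_0(6)$, a curve of genus zero, so that expressing everything through a Hauptmodul of $X_0(6)$ reduces the claim to a polynomial identity in one variable, to be confirmed from finitely many Fourier coefficients together with a pole count.
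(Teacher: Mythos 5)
The paper does not prove this lemma at all: it is quoted directly from Berndt's edition of the notebooks (the citation \cite{BCB2} \emph{is} the proof), so there is no internal argument to measure your proposal against; I can only judge it on its own terms. Your algebraic reduction to $u^6+v^6=u^4v^4+9u^2v^2$ is right (though you must multiply \eqref{S21} by $u^3v^3$, not $u^2v^2$, to get there), and your second, parametrization-free route is sound in outline: $u=\{\eta(\tau)/\eta(3\tau)\}^2$ and $v=\{\eta(2\tau)/\eta(6\tau)\}^2$ are correct (the fractional powers of $q$ cancel exactly), suitable powers of $uv$ and $u/v$ are modular functions on the genus-zero curve $X_0(6)$, and the polynomial identity can then be certified from the cusp orders plus finitely many Fourier coefficients. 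That is essentially the same style of $q$-expansion verification the paper itself uses in Section \ref{Se3}; what is missing is only the execution (the cusp analysis and the bound on how many coefficients suffice).

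Your first route, however, contains a genuine error. The evaluation $f(-q)=\sqrt{z_\alpha}\,2^{-1/6}\{\alpha(1-\alpha)/q\}^{1/24}$ is not Ramanujan's formula: the correct entry (Berndt, Part III, Ch.~17, Entry 12) is $f(-q)=\sqrt{z_\alpha}\,2^{-1/6}(1-\alpha)^{1/6}(\alpha/q)^{1/24}$, with exponent $1/6$ on $1-\alpha$, not $1/24$; the form you quote is essentially the one for $f(-q^2)$, which carries $\{\alpha(1-\alpha)/q\}^{1/12}$ and the constant $2^{-1/3}$. The discrepancy is visible already at order $q$: your formula yields $f(-q)=1+q+O(q^2)$ instead of $1-q+O(q^2)$. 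Consequently the displayed identities $u=m\{\alpha(1-\alpha)/(\beta(1-\beta))\}^{1/12}$ and its analogue for $v$ are false --- the correct expression is $u=m\,\alpha^{1/12}(1-\alpha)^{1/3}\beta^{-1/12}(1-\beta)^{-1/3}$, with unequal powers of $\alpha$ and $1-\alpha$ --- so the Landen step and the rational parametrization of the degree-$3$ modular equation, which are fine in principle, would be applied to the wrong functions, and the deferred ``mechanical'' verification would not close. Fix the parametrization (or simply carry out the $X_0(6)$ argument) and the plan becomes a genuine proof.
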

\begin{lemma}\cite[Ch. 20, Entry 1(iv), p. 345]{BCB1}
		\textrm{If}\ \ $u:=\dfrac{f_1^3}{q^{1/4}f_3^3}$ \ \ {and} \ \ $v:=\dfrac{f_3^3}{q^{3/4}f_9^3},$ \ \ {then}
	\begin{equation}\label{S22}
	uv+\frac{27}{uv}+9=\frac{v^2}{u^2}.
	\end{equation}
\end{lemma}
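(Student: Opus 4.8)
\medskip
\noindent\textbf{Proof proposal.} Since $u,v>0$ for $0<q<1$, I would begin by recasting the asserted identity as a statement about the $q$--products $f_n$. A direct computation gives $uv=f_1^3/(q f_9^3)$ and $v^2/u^2=f_3^{12}/(q f_1^6 f_9^6)$, so multiplying the claimed relation through by the positive quantity $q f_1^6 f_9^6$ turns it into the equivalent identity
\begin{equation}\label{pp1}
	f_1^9 f_9^3+9q f_1^6 f_9^6+27q^2 f_1^3 f_9^9=f_3^{12}.
\end{equation}
The problem is thus reduced to proving \eqref{pp1}.

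\smallskip
To prove \eqref{pp1} I would pass to the cubic (signature~$3$) theory and use the Borwein theta--functions
\[
	a(q)=\sum_{m,n=-\infty}^{\infty}q^{m^2+mn+n^2},\qquad b(q)=\frac{f_1^3}{f_3},\qquad c(q)=3q^{1/3}\frac{f_3^3}{f_1},
\]
together with the three classical facts $a^3(q)=b^3(q)+c^3(q)$, $\ a(q)=a(q^3)+2c(q^3)$ and $3a(q^3)=a(q)+2b(q)$; the latter two combine immediately into the additive splitting $a(q)=b(q)+3c(q^3)$. Since $c(q^3)=3q f_9^3/f_3$, this splitting reads $a(q)f_3=f_1^3+9q f_9^3$, while $a^3(q)=b^3(q)+c^3(q)$ reads $a^3(q) f_1^3 f_3^3=f_1^{12}+27q f_3^{12}$. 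Cubing the former and using the latter to eliminate $a^3(q)$ gives $(f_1^3+9q f_9^3)^3 f_1^3=f_1^{12}+27q f_3^{12}$; expanding the cube, cancelling the common term $f_1^{12}$ and dividing by $27q$ yields precisely \eqref{pp1}.

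\smallskip
A more classical alternative, closer in spirit to the original proofs of such entries, is to introduce the cubic multipliers $m$ and $m'$ relating $\alpha$ to $\beta$ and $\beta$ to $\gamma$ (where $\beta,\gamma$ have degrees $3$ and $9$ over $\alpha$); using the rational parametrization of a degree--$3$ modular equation together with the evaluation of $f(-q)$ in terms of $z={}_2F_1(1/2,1/2;1;\alpha)$ and $\alpha$, one writes $u$ and $v$ as explicit algebraic functions of $m$ and $m'$, linked by the single polynomial relation expressing that the two descriptions of $\beta$ agree; the assertion then becomes an algebraic identity in $m,m'$ modulo that relation.

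\smallskip
The only genuinely non-routine step in the first route is recognising the splitting $a(q)=b(q)+3c(q^3)$; once it is in hand, \eqref{pp1} drops out after a one-line expansion. In the second route the difficulty is the reverse: everything is explicit, but carrying out the elimination against the constraint — and in particular tracking the fourth roots that occur and choosing their branches correctly for $0<q<1$ — is where essentially all the work lies. For that reason I would follow the first route.
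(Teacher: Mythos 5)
Your argument is correct. The reduction to the single $q$-product identity $f_1^9f_9^3+9qf_1^6f_9^6+27q^2f_1^3f_9^9=f_3^{12}$ is an accurate restatement of \eqref{S22}, and the derivation from $a^3(q)=b^3(q)+c^3(q)$ together with the splitting $a(q)=b(q)+3c(q^3)$ (itself a correct consequence of $b(q)=\tfrac{3a(q^3)-a(q)}{2}$ and $c(q^3)=\tfrac{a(q)-a(q^3)}{2}$) checks out line by line: cubing $a(q)f_3=f_1^3+9qf_9^3$, multiplying by $f_1^3$, and comparing with $a^3(q)f_1^3f_3^3=f_1^{12}+27qf_3^{12}$ gives exactly the reduced identity after cancelling $f_1^{12}$ and dividing by $27q$. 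Note, however, that the paper does not prove this lemma at all; it is quoted verbatim as Entry 1(iv) of Chapter 20 from Berndt's \emph{Ramanujan's Notebooks, Part III}, where the proof proceeds by the classical route you sketch as your second alternative, namely expressing $f(-q)$, $f(-q^3)$, $f(-q^9)$ in terms of $z={}_2F_1(\tfrac12,\tfrac12;1;\alpha)$, the moduli of degrees $3$ and $9$, and the associated multipliers, and then verifying an algebraic identity. Your primary route through the Borwein cubic theta-functions is therefore genuinely different from the source: it trades the branch-tracking and parametrization bookkeeping of the classical method for three standard facts of the signature-$3$ theory, and it has the added merit of making explicit why the paper can describe such identities as modular equations in the alternative theory of signature $3$. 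If you intend the argument to be fully self-contained you should cite the three Borwein identities you invoke (e.g.\ Berndt, \emph{Ramanujan's Notebooks, Part V}, Chapter 33), since they are deep results in their own right; but as a proof of the lemma the logic is complete.
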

\begin{lemma}\cite[Ch. 25, Entry 62, p. 221]{BCB2}
	\textrm{If}\ \ $u:=\dfrac{f_1}{q^{1/12}f_3}$ \ \ {and} \ \ $v:=\dfrac{f_5}{q^{5/12}f_{15}},$ \ \ {then}
	\begin{equation}\label{S23}
	(uv)^2+5+\frac{9}{(uv)^2}=\frac{v^3}{u^3}-\frac{u^3}{v^3}.
	\end{equation}
\end{lemma}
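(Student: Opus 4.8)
The plan is to recognize $u$ and $v$ as eta--quotients and reduce the identity to a finite verification on the (genus $1$) modular curve $X_0(15)$. Writing $q=e^{2\pi i\tau}$ and using $f(-q^n)=q^{-n/24}\eta(n\tau)$, the fractional powers of $q$ built into $u$ and $v$ are exactly the ones that cancel, so that
\[
u=\frac{\eta(\tau)}{\eta(3\tau)},\qquad v=\frac{\eta(5\tau)}{\eta(15\tau)} .
\]
Equivalently (this is the ``signature $3$'' reading promised in the introduction), $u^{12}=\eta(\tau)^{12}/\eta(3\tau)^{12}$ is a Hauptmodul for $\Gamma_0(3)$; with the Borwein cubic theta--functions $b(q)=f_1^3/f_3$, $c(q)=3q^{1/3}f_3^3/f_1$ and $x:=c^3(q)/a^3(q)$ the product representations give $u^{12}=27\,b^3(q)/c^3(q)=27(1-x)/x$ at once, and likewise $v^{12}=27(1-y)/y$ where $y$ is the modulus attached to the nome $q^5$, so the lemma is literally the cubic modular equation of degree $5$ linking $x$ and $y$.

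Next I would clear denominators: multiplying the assertion by $(uv)^2$ puts it in the equivalent shape
\[
\frac{v^5}{u}-\frac{u^5}{v}=(uv)^4+5(uv)^2+9 .
\]
Each of $v^5/u=\eta(\tau)^{-1}\eta(3\tau)\eta(5\tau)^5\eta(15\tau)^{-5}$, $\;u^5/v=\eta(\tau)^5\eta(3\tau)^{-5}\eta(5\tau)^{-1}\eta(15\tau)$, $\;(uv)^2=\eta(\tau)^2\eta(3\tau)^{-2}\eta(5\tau)^2\eta(15\tau)^{-2}$ is an eta--quotient $\prod_{d\mid 15}\eta(d\tau)^{r_d}$ satisfying $\sum_{d} d\,r_d\equiv\sum_{d}(15/d)r_d\equiv0\pmod{24}$ with $\prod_{d}d^{r_d}$ a rational square; hence each is a modular function for $\Gamma_0(15)$ with trivial character, holomorphic and non--vanishing on $\mathbb{H}$. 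Therefore $F:=v^5/u-u^5/v-(uv)^4-5(uv)^2-9$ is a modular function for $\Gamma_0(15)$ whose only possible poles are at the four cusps $1,3,5,15$. I would then use Ligozat's order--at--a--cusp formula for eta--quotients to compute those cusp orders, bound by an explicit integer $D$ the total pole order of $F$ over the cusps other than $i\infty$, confirm that the polar part of $F$ at $i\infty$ (a priori of order $2$) cancels, and expand $F$ in powers of $q$ at $i\infty$ — routine multiplication of Euler products starting from $u=q^{-1/12}(1-q-q^2+\cdots)$ and $v=q^{-5/12}(1-q^5+\cdots)$ — far enough to exhibit vanishing to order larger than $D$. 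Since on $X_0(15)$ a nonzero modular function cannot have more zeros than poles (valence formula), this forces $F\equiv0$, which is the lemma.

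The step I expect to need care is the cusp bookkeeping: getting the four eta--quotient cusp orders right, hence the bound $D$ and the number of $q$--coefficients to be checked, and in particular confirming that the order--$2$ poles of $v^5/u$ and of $(uv)^4$ at $i\infty$ genuinely cancel in $F$ — that cancellation, together with the signs, is the one place where the precise right--hand side $v^3/u^3-u^3/v^3$ (a difference, not a sum) is actually tested. A different way to finish, bypassing the valence bound, would be to substitute $x=27/(27+u^{12})$ and $y=27/(27+v^{12})$ into the known cubic modular equation of degree $5$, simplify, and select the factor singled out by the leading $q$--coefficients; there the main obstacle shifts to carrying out the elimination cleanly and to choosing the branch of the twelfth root realized for $0<q<1$.
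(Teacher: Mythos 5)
The paper offers no proof of this lemma at all: it is quoted verbatim from Berndt's \emph{Ramanujan's Notebooks, Part IV} (Ch.\ 25, Entry 62), so there is nothing internal to compare against. Your strategy is sound and is, in substance, the standard way these Chapter 25 $P$--$Q$ eta-quotient identities are established in the cited source, namely by the theory of modular forms. Your identifications check out: $u=\eta(\tau)/\eta(3\tau)$ and $v=\eta(5\tau)/\eta(15\tau)$ (the fractional powers of $q$ do cancel exactly), the cleared form $v^5/u-u^5/v=(uv)^4+5(uv)^2+9$ is algebraically correct, and each term satisfies Ligozat's criteria on $\Gamma_0(15)$ (for instance $\sum_d d\,r_d=-48,\ 0,\ -24$ and $\prod_d d^{r_d}=3^{-4}$ in each case), so $F$ is indeed a modular function on $X_0(15)$ holomorphic off the cusps; the valence-formula finish is then legitimate. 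Two small reassurances on the points you flag: $u^5/v$ has order $0$ at $i\infty$, so only $v^5/u$ and $(uv)^4$ carry the order-$2$ pole there, and both have leading coefficient $1$, so the top-order cancellation you want does occur; and the sign (a difference $v^3/u^3-u^3/v^3$ rather than a sum) is forced already by matching the $q^{-1}$ coefficients of $(uv)^2$ and $v^3/u^3$. What remains is genuinely routine bookkeeping --- computing the cusp orders at $0$, $1/3$, $1/5$ to fix the Sturm bound $D$ and expanding $F$ past $q^{D}$ --- and carrying that out would make the argument complete; your fallback via the degree-5 cubic modular equation in the variables $x=27/(27+u^{12})$, $y=27/(27+v^{12})$ is also viable but, as you note, messier because of the branch selection.
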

 
\section{New ``mixed'' modular equations of degree 21}\label{Se3}
  We refresh our notation used in the previous section and let $u$, $w$, $r$ and $s$ be the modular functions defined by 
\begin{equation}\label{uv}
u:=\frac{qf_7f_{21}}{f_1f_3},\quad w:=\sqrt q\frac{f_1f_{21}}{f_3f_{7}},\quad r:=\frac{f_1}{q^{1/12}f_3} \quad\text{and} \quad s:=\frac{f_7}{q^{7/12}f_{21}}.
\end{equation}
\begin{lemma}The following identities holds:\label{lem0}
	\begin{equation}\label{R1}
	7u+\frac{1}{u}+3=w^2+\frac{1}{w^2}, 
	\end{equation}
	\begin{equation}\label{R2}
	r^6+\left(\frac{\sqrt{3}}{r}\right)^6=\left(1+7u\right)^3\times\sqrt{7u+1+\frac{1}{u}},
	\end{equation}
	and
	\begin{equation}\label{R3}
	s^6+\left(\frac{\sqrt{3}}{s}\right)^6=\left(1+\frac{1}{u}\right)^3\times\sqrt{7u+1+\frac{1}{u}}.
		\end{equation}
\end{lemma}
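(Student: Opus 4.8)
The plan is to derive the three identities of Lemma~\ref{lem0} from the two $P$--$Q$ modular equations of Lemmas~1.1 and~1.2 by a careful change of variables. First I would observe that the variables in \eqref{deg7}, namely $P=f_1/(q^{1/4}f_7)$ and $Q=f_3/(q^{3/4}f_{21})$, are related to our $u$, $w$ by $PQ=f_1f_3/(qf_7f_{21})=1/u$ and $Q/P=f_3f_7/(\sqrt q f_1f_{21})=1/w$ (so $P/Q=w$). Substituting these into \eqref{deg7} gives $\frac1u+7u=\frac1{w^2}-3+w^2$, which is exactly \eqref{R1} after moving the $-3$ across. So \eqref{R1} is essentially a direct restatement of Entry~68.

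For \eqref{R2} and \eqref{R3} I would turn to \eqref{deg3}, whose variables are $P=f_1/(q^{1/12}f_3)=r$ and $Q=f_7/(q^{7/12}f_{21})=s$ in our notation. Thus \eqref{deg3} reads
\begin{equation*}
(rs)^3+\frac{27}{(rs)^3}=\left(\frac sr\right)^4-7\left(\frac sr\right)^2+7\left(\frac rs\right)^2-\left(\frac rs\right)^4.
\end{equation*}
The key auxiliary relations are the two ``cross'' evaluations $rs=f_1f_7/(q^{2/3}f_3f_{21})$ and $s/r=q^{1/2}f_3f_7/(q^{1/12}f_1f_{21})\cdot q^{\cdots}$; a short bookkeeping check of $q$-powers shows $rs$ and $r/s$ are rational monomials in $u$ and $w$. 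Indeed, $rs=\dfrac{f_1f_7}{q^{2/3}f_3f_{21}}$ and, using $u=qf_7f_{21}/(f_1f_3)$ and $w=\sqrt q\,f_1f_{21}/(f_3f_7)$, one finds $rs=1/(u w)$-type expressions and $r/s$ expressible through $u,w$ as well, so that \eqref{deg3} becomes a polynomial identity in $u$ and $w$. Combining this with \eqref{R1}, which lets us eliminate $w^2+1/w^2$ in favour of $7u+1/u+3$, should collapse everything to an identity in $u$ alone; comparing with \eqref{R2}+\eqref{R3} (whose sum is $r^6+s^6+3^3(1/r^6+1/s^6)$ and whose structure suggests splitting by the symmetry $u\mapsto1/u$, $r\leftrightarrow s$) will pin down the two pieces individually.

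More precisely, for \eqref{R2} I would compute $r^6+27/r^6$ directly: writing $x=r^6$ we want to show $x+27/x=(1+7u)^3\sqrt{7u+1+1/u}$. Squaring removes the radical and reduces the claim to a polynomial identity $\bigl(x+27/x\bigr)^2=(1+7u)^6\bigl(7u+1+1/u\bigr)$; the left side is $x^2+54+729/x^2=r^{12}+54+(27/r^6)^2$, and $r^{12}=f_1^{12}/(q f_3^{12})$ is a classical eta-quotient whose relation to $u$ and $w$ (hence, via \eqref{R1}, to $u$ alone) I would extract from \eqref{deg3} and \eqref{R1}. The same strategy handles \eqref{R3} with $s$ in place of $r$ and $1/u$ in place of $u$, reflecting the evident $u\leftrightarrow1/u$ symmetry that swaps $f_1\leftrightarrow f_7$, $f_3\leftrightarrow f_{21}$.

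The main obstacle I anticipate is the algebra of verifying that $(rs)^3+27/(rs)^3$ and the right-hand side of \eqref{deg3} really do reduce, after using \eqref{R1}, to expressions solely in $u$ that factor as the perfect cubes $(1+7u)^3$ and $(1+1/u)^3$ times a common radical; the appearance of the square root $\sqrt{7u+1+1/u}$ means one must square at an intermediate stage and then argue the correct sign branch (positive for $0<q<1$, since all the eta-quotients are positive there). Keeping track of the fractional powers of $q$ so that every quantity in sight is a genuine modular function — rather than picking up stray roots of unity — is the delicate bookkeeping step, but it is routine once the dictionary $P,Q\mapsto u,w,r,s$ is fixed. I expect no conceptual difficulty beyond Lemmas~1.1 and~1.2 themselves.
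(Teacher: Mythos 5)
Your derivation of \eqref{R1} is complete and correct: with $PQ=f_1f_3/(qf_7f_{21})=1/u$ and $Q/P=f_3f_7/(q^{1/2}f_1f_{21})=1/w$, Entry 68 (equation \eqref{deg7}) is literally \eqref{R1}. For \eqref{R2}--\eqref{R3} you have also picked the same source the paper points to (the paper gives no proof of its own, only the remark that these follow from \eqref{deg7} and \eqref{deg3} as in Anusha et al.), and the key observation $r/s=w$ is right. However, two of your intermediate steps would fail as written. First, $rs=f_1f_7/(q^{2/3}f_3f_{21})$ is \emph{not} a rational monomial in $u$ and $w$: the exponent vectors of $u$ and $w$ in $(f_1,f_3,f_7,f_{21})$ are $(-1,-1,1,1)$ and $(1,-1,-1,1)$, and $(1,-1,1,-1)$ is not in their span (the first and third coordinates force $-a+b=1$ and $a-b=1$ simultaneously). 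Only the ratio $r/s$ lies in the group generated by $u,w$. Second, your proposed reduction of \eqref{R2} by squaring is circular: $\left(r^6+27/r^6\right)^2=r^{12}+54+729/r^{12}$, and knowing $r^{12}+729/r^{12}$ as a function of $u$ is exactly as hard as the original claim.

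The repair is the symmetric/antisymmetric split you only gesture at. Write $X=(rs)^3$, so $r^6=Xw^3$ and $s^6=Xw^{-3}$. Equation \eqref{deg3} gives $X+27/X=(w^2-w^{-2})(7-y)$ with $y=w^2+w^{-2}$, hence $\left(X-27/X\right)^2=(y^2-4)(7-y)^2-108$; substituting $y=7u+1/u+3=T+3$ from \eqref{R1} this equals $(T^2-28)(T-1)^2=\left((7u-1/u)(T-1)\right)^2$, so $X-27/X=\pm(7u-1/u)(T-1)$. Then
\begin{equation*}
r^6+\frac{27}{r^6}\pm\left(s^6+\frac{27}{s^6}\right)
\end{equation*}
equal $(X+27/X)(w^3+w^{-3})$ and $(X-27/X)(w^3-w^{-3})$ respectively, and the polynomial identities $(T+5)(T+2)(4-T)=-\left[(1+7u)^3+(1+1/u)^3\right]$ and $(1+7u)^3-(1+1/u)^3=(T+4)(7u-1/u)(T-1)$, together with $w^{-1}-w=+\sqrt{T+1}$ and the sign choice $X-27/X=-(7u-1/u)(T-1)$ (both fixed by letting $q\to0^+$, as you correctly anticipate), yield exactly the sum and difference of \eqref{R2} and \eqref{R3}. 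So your plan is the right one and closes once these two steps are replaced; without that replacement the argument as proposed does not go through.
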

	Equation \eqref{R1} appear on p. 323 of Ramanujan's second notebook \cite{SR2}, proofs are given in \cite{BCB2}. Equations \eqref{R2} and \eqref{R3} were derived by using \eqref{deg7} and \eqref{deg3} by T. Anusha, et al. in \cite{ABCV}. Through out this section, we set	
\begin{equation}
u_{r}:= \dfrac{f_r f_{3r}}{q^rf_{7r}f_{21r}}
\,\,\,\,\ \textrm {and} \,\,\,\,\ w_{r}:= q^{r/2}\dfrac{f_rf_{21r}}{f_{3r}f_{7r}}.
\end{equation}

\begin{theorem}The following identity holds:
\begin{equation}\label{w2}
u_1^3-2u_1^2u_2-u_1u_2-7u_1^2u_2^2-2u_1u_2^2+u_2^3=0.
\end{equation}
\end{theorem}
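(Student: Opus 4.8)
The plan is to obtain \eqref{w2} by elimination: I would apply the ``degree $21$'' relations \eqref{R2} and \eqref{R3} of Lemma~\ref{lem0} both at $q$ and at $q^{2}$, use the degree-$2$ modular equation \eqref{S21} (at $q$ and at $q^{7}$) to couple the ``$q$'' and ``$q^{2}$'' theta quotients, and then eliminate every auxiliary quantity to be left with a polynomial relation between $u_{1}$ and $u_{2}$. For $g=g(q)$ write $g_{n}:=g(q^{n})$, so that, in the notation of \eqref{uv}, $u_{1}=u$, $r_{1}=r$, $s_{1}=s$, $w_{1}=w$, while $u_{2},r_{2},s_{2},w_{2}$ are the same functions with $q$ replaced by $q^{2}$. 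Rewriting \eqref{R2} and \eqref{R3} at $q$ and at $q^{2}$ then gives, for $j=1,2$,
\[
 r_{j}^{6}+\frac{27}{r_{j}^{6}}=\Psi(u_{j}),\qquad s_{j}^{6}+\frac{27}{s_{j}^{6}}=\Phi(u_{j}),
\]
where $\Psi(t):=(1+7t)^{3}\sqrt{7t+1+1/t}$ and $\Phi(t):=(1+1/t)^{3}\sqrt{7t+1+1/t}$. The structural facts I would use are that $\Psi(t)^{2}$ and $\Phi(t)^{2}$ are \emph{rational} functions of $t$ and that $\Psi(t)/\Phi(t)=\bigl(t(1+7t)/(1+t)\bigr)^{3}$.

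Next I would bring in the degree-$2$ bridge. Replacing $q$ by $q^{7}$ in \eqref{S21} turns its $u,v$ into $s_{1}^{2},s_{2}^{2}$, so that $s_{1}^{2}s_{2}^{2}+9/(s_{1}^{2}s_{2}^{2})=s_{1}^{6}/s_{2}^{6}+s_{2}^{6}/s_{1}^{6}=:T$. Cubing this identity and noticing that the cube of its left side equals $s_{1}^{6}s_{2}^{6}+729/(s_{1}^{6}s_{2}^{6})+27T=\bigl(s_{1}^{6}+27/s_{1}^{6}\bigr)\bigl(s_{2}^{6}+27/s_{2}^{6}\bigr)$, the displayed identities give $T^{3}=\Phi(u_{1})\Phi(u_{2})$. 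Consequently $\bigl(s_{1}^{6}-27/s_{1}^{6}\bigr)\bigl(s_{2}^{6}-27/s_{2}^{6}\bigr)=T^{3}-54T$, while $\bigl(s_{j}^{6}-27/s_{j}^{6}\bigr)^{2}=\Phi(u_{j})^{2}-108$; multiplying the last two relations, expanding $\bigl(T^{3}-54T\bigr)^{2}$, and using $\Phi(u_{1})^{2}\Phi(u_{2})^{2}=T^{6}$, one is led after simplification to $T^{4}-27T^{2}=\Phi(u_{1})^{2}+\Phi(u_{2})^{2}-108$. (Carrying out the same steps with \eqref{S21} applied instead to $r_{1}^{2},r_{2}^{2}$ yields the $\Psi$-analogues, with $T_{r}:=r_{1}^{6}/r_{2}^{6}+r_{2}^{6}/r_{1}^{6}$ in place of $T$; comparing the two cube relations then gives $T_{r}/T=u_{1}u_{2}(1+7u_{1})(1+7u_{2})/\bigl((1+u_{1})(1+u_{2})\bigr)$, a rational function of $u_{1},u_{2}$.)

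With these in hand, put $Z:=T^{2}$. The two relations just obtained read $Z^{3}=\Phi(u_{1})^{2}\Phi(u_{2})^{2}$ and $Z^{2}-27Z=\Phi(u_{1})^{2}+\Phi(u_{2})^{2}-108$, and \emph{both} right-hand sides are rational functions of $u_{1},u_{2}$. Eliminating $Z$ between them — by computing $Z^{3}$ from the quadratic and comparing, or by a resultant, or (if one also keeps the $\Psi$-relations) by solving outright for $Z$ — produces a single polynomial identity in $u_{1}$ and $u_{2}$. Clearing denominators and factoring this identity, the left side of \eqref{w2} appears as one of its factors; comparing the leading terms of the expansions $u_{j}=q^{j}+q^{2j}+2q^{3j}+\cdots$ confirms that the complementary factor never vanishes, whence \eqref{w2} holds.

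The hard part is the concluding elimination. The resulting eliminant is of much larger degree in $u_{1},u_{2}$ than \eqref{w2}, so one must simplify and factor it (in practice with a computer algebra system) and then confirm, by comparing the first few terms of the $q$-expansions of $u_{1}$ and $u_{2}$, that \eqref{w2} is the genuine factor while the remaining factor never vanishes. The derivation of the two relations for $Z=T^{2}$ involves only unambiguous algebra; the few root extractions elsewhere (the positive square roots in $\Phi,\Psi$, the real cube root giving $T_{r}/T$) are pinned down by the behaviour $u_{j}\sim q^{j}$ as $q\to0$, so no separate branch discussion is required.
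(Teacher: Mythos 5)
Your proposal is correct and takes essentially the same route as the paper: couple the theta quotients at $q$ and $q^{2}$ via the degree-2 equation \eqref{S21}, convert to $u_{1},u_{2}$ using Lemma \ref{lem0}, eliminate the auxiliary quantities, factor the resulting polynomial, and single out the factor \eqref{w2} by comparing $q$-expansions. The only (immaterial) difference is that you route the computation through $s$ and \eqref{R3}, applying \eqref{S21} at $q^{7}$ and organizing the elimination around $T^{2}$, whereas the paper cubes \eqref{S21} directly and substitutes the quadratic-formula expressions for the sixth powers coming from \eqref{R2}.
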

\begin{proof}
Cubing equation \eqref{S21}, we obtain
\begin{equation}\label{S311}
d^{36}-729c^{12}d^{12}-c^{24}d^{24}+c^{36}-24c^{24}d^{12}-24c ^{12}d^{24}=0,
\end{equation}
\text{where}\,\, $c:=\dfrac{f_1^2}{q^{1/6}f_3^2}$ \  \text{and} \ $d:=\dfrac{f_2^2}{q^{1/3}f_6^2}$.

Invoking equation \eqref{R2}, we deduce that
\begin{equation}\label{S312}
c^6=\frac{M\pm\sqrt{M^2-108}}{2}\quad \text{and} \quad d^6=\frac{N\pm\sqrt{N^2-108}}{2}, 
\end{equation}
where \ 

$M:=(1+7u)^3\sqrt{7u+1+\dfrac{1}{u}},$ \quad  $N:=(1+7v)^3\sqrt{7v+1+\dfrac{1}{v}},\,\,\text{and} \,\,v=u(q^2).$

Using \eqref{S312} in \eqref{S311} and factorizing, we get
\begin{equation}\label{S313}\begin{split}
&M^6+N^6-324(M^4+N^4)+34992(M^2+N^2)+10287M^2N^2\\&+84(M^2N^4+M^4N^2)-M^4N^4=1259712.
\end{split}\end{equation}
On factorizing the above equations \eqref{S313} and using the definition of $M$ and $N$, we get  
\begin{equation}\label{w2224}
(u^3-2u^2v-uv-7u^2v^2-2uv^2+v^3)(1+47uvH(q))=0,
\end{equation}
where

\begin{equation*}
\begin{split}
&H(q)=b^{19}vu^2+(2u+7u^2+3)b^{19}-15a^{11}-2a^{12}-2118a-8261a^3-ua^{12}\\&-5346a^2-8926a^4-7242a^5-32146ua^5-2321a^7-948a^8-306a^9-78a^{10}\\&+(7a+189u^2+196u^3+ua^3+33)u^2b^{18}+(12691u^4+609u+14406u^5+250\\&+ua^5+55a^2+5537u^3)u^2b^{17}+(201ua^4+691ua^3+51ua^5+ua^7+235a^3+731a\\&+8ua^6+1446+539a^2)u^2b^{16}+(6726+1225a^4+2335a^3+4642a+69ua^7+ua^9\\&+321ua^6+10ua^8+1205ua^5+3485ua^4+4017a^2)u^2b^{15}+(6617ua^6+481ua^8\\&+17661a^3+1999ua^7+13a^{10}+24019a^2+25927+ua^{11}+11323a^4+2603a^6\\&+23574a+12ua^{10}+5891a^5)u^2b^{14}+(1019u^2a^{10}+19664u^2a^8+4830518u^3\\&+102223ua^4+12u^2a^{12}-1+5095u^2a^9+6024a^2+116903ua^3+44087ua^6\end{split}
\end{equation*}
\begin{equation*}
\begin{split}&+74728ua^5-2u+60247u^2a^7+21563ua^7+143u^2a^{11})vb^{12}+(237281u^2a^5\\&+2416421u^3-2+123062u^2a^6+123253u^3a^7+233974u^2+33281u^3a^8-6u\\&+1019u^3a^{10}+51699u^2a^7+a^{12}u^3+466099u^2a^3+372718u^2a^4+66960ua^3\\&+6910u^3a^9+13u^2u^{12})b^{12}+(80237u^2a^8+79511ua-15+155950u^3a^8-65u\\&+1459306u^2a^4+1560736u^2a^2+977828u^2a^5+7147588u^3+1710871u^2a^3\\&+527274u^2a^6+229007u^2a^7+33281u^3a^9+481u^3a^{11}+5095u^3a^{10}+10u^3a^{12})b^{11}\\&+(80237u^2a^9+1765010u^2a^6-464u+1999u^3a^{11}+3180208u^2a^5+162259ua\\&+4326688u^2a^2+4573523u^2a^4+281952u^2a^8-78+u^3a^{13}+u^25110750a^3\\&+19664u^3a^{10}+365308ua^2+123253u^3a^9+69u^3a^{12}+786256u^2a^7)b^{10}\\&+(4718884u^2a^6+229007u^2a^9+8306468u^2a^5+1424879ua^3+51699u^2a^{10}\\&+11605492u^2a^4+60247u^3a^{10}+6617u^3a^{11}+2147408u^2a^7-2439u-306\\&+321u^3a^{12}+1977801u^2+786256u^2a^8+254954a^5+110313a^3+8u^3a^{13})7b^9\\&
+(23894153u^2a^4+u^221563a^{11}+3563810ua^4+17521111u^2a^5+51u^3a^{13}\\&+u^24718884a^7+u^2527274a^9+10162311u^2a^6+1765010u^2a^8+387644a^4\\&
+u^3a^{14}+1364169ua^2+2603u^2a^{12}+59022a^2+u^2123062a^{10}+482u^3a^{12}\\&
-942-9990u)b^8+(29611502u^2a^5+201u^3a^{13}+17521111u^2a^6+977828u^2a^9\\&
+3180208u^2a^8+237281u^2a^{10}+5644543ua^5-2321-32824u+3485u^3a^{12}\\&
+8306468u^2a^7+580914ua+44087u^2a^{11}+1968153ua^2+5739395ua^4\\&
+6u^3a^{14}+5891u^2a^{12}+4196501ua^3)b^7+(4573523ua^9-597513u+691u^2a^{14}\\&
+23894153ua^7+372718ua^{11}+316825a^2+7367167a^6+11605492ua^8\\&
+175a^{15}+1459306ua^{10}+37u^2a^{15}+5644543a^7+74728ua^{12}+11323ua^{13}\\&
+7300698a^5-32146+2226056a^3+5130093a^4+u^2a^{16})vb^5+(5110750u^2a^8\\&+1777817ua^2
+4u^3a^{15}+7016029ua^4+110477ua+7300698ua^5+113u^3a^{14}\\&-175162u+10434u^2a^{10}
+2335u^2a^{13}+1710871u^2a^9+3563810ua^7+235u^2a^{14}\\&-7242+254954a^9+102223u^2a^{11}
+2523ua^{13}+5739395ua^6+4675710ua^3)v^5\\&+(4675710ua^5-322234au+5130093ua^5
+387644a^8+1424879ua^8+55u^2a^{15}\\&+680221ua^2+11a^{16}+27u^3a^{15}+4196501ua^6
+4017u^2a^{13}+4326688u^2a^8\\&+24019u^2a^{12}-8926+116903u^2a^{11}+66960ua^{11}
-39270a+1560736u^2a^9\\&+2782687ua^3+u^3a^{16})v^4+(4642u^2a^{13}+731u^2a^{14}
+87u^2a^{15}+23574u^2a^{12}\\&+345203u^2a^{10}-336223ua^2+98582u^2a^{11}+318008a^6
+365308ua^9-85283a^2\\&+2u^3a^{16}+ua^{17}+680221ua^3-8261+1364169ua^7
+1968153ua^6+1021084u^2a^9\\&-43653a+1777817ua^4+110313a^9)b^3+(59022a^8
+405536ua^5+12048ua^{12}
\end{split}
\end{equation*}
\begin{equation*}\label{w224}
\begin{split}
&+1446u^2a^{14}+3u^2a^{17}+162259ua^9+33a^{16}u^2
+233974u^2a^{10}+110477ua^4\\&+250u^2a^{15}+282543ua^8+79511ua^{10}+6726u^2a^{13}
-30980a-70974a^2\\&-85283a^3-5346+25927u^2a^{12}+u161778a^6+u^3a^{12}
-14669ua^3)b^2\\&-(175162ua^4+85359ua^5+2u^2a^{11}+32824ua^6+2439ua^8+2118+6ua^{11}\\&+65ua^{10}+9990ua^7+464ua^9+77450a^2+43653a^3
+39270a^4+12140a)b,
\end{split}
\end{equation*}
where \,\,$a=7u$\,\,and\,\,$b=7v$. We find that the first factor of \eqref{w2224} vanishes and second factor does not vanish for the sequence $\displaystyle \{q_n\}=\lb\{\frac{1}{1+n}\rb\}$.
Hence, first factor is identically equal to zero on $|q|<1$. By setting $u=u_1$ and $v=u_2$, we complete the proof.
\end{proof}
\begin{theorem}
\text{If}\ \ $P:=u_1 u_3$\ \ {and} \ \ $Q:=\dfrac{u_1}{u_3},$\ \ \  then
\begin{equation}\begin{split}\label{w3}
&Q^3+\frac{1}{Q^3}=5\left(Q^2+\frac{1}{Q^2}\right)+20\left(Q+\frac{1}{Q}\right)+\left(P+\frac{7^2}{P}\right)\left[Q+\frac{1}{Q}+1\right]\\&3\left(\sqrt{P}+\frac{7}{\sqrt P}\right)\left[\left(\sqrt {Q^3}+\frac{1}{\sqrt {Q^3}}\right)+3\left(\sqrt {Q}+\frac{1}{\sqrt {Q}}\right)\right]+42 . \end{split}\end{equation}
\end{theorem}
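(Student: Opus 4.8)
\medskip
\noindent\emph{Plan of proof.} The argument will run parallel to the proof of \eqref{w2}, the degree--$2$ input \eqref{S21} being replaced by the degree--$3$ modular equation \eqref{S22}. Keep $r=f_1/(q^{1/12}f_3)$ as in \eqref{uv} and put $r_3:=r(q^3)=f_3/(q^{1/4}f_9)$, so that $r^3$ and $r_3^3$ are precisely the quantities denoted $u$ and $v$ in \eqref{S22}; thus
\begin{equation*}
r^3r_3^3+\frac{27}{r^3r_3^3}+9=\frac{r_3^6}{r^6}.
\end{equation*}
On the other hand, \eqref{R2} applied at $q$ and at $q^3$ (recall $u_1(q^3)=u_3$) gives
\begin{equation*}
r^6+\frac{27}{r^6}=M,\qquad r_3^6+\frac{27}{r_3^6}=N,
\end{equation*}
where $M:=(1+7u_1)^3\sqrt{7u_1+1+1/u_1}$ and $N:=(1+7u_3)^3\sqrt{7u_3+1+1/u_3}$ (cf. the proof of \eqref{w2}). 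The plan is to eliminate $r$ and $r_3$ from these three relations and then to rewrite the outcome in terms of $P=u_1u_3$ and $Q=u_1/u_3$.

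\medskip
First I would clear the radical $r^3r_3^3=\sqrt{r^6r_3^6}$: writing \eqref{S22} as $v(v^2-9u^2)=u(u^2v^2+27)$ with $u=r^3$, $v=r_3^3$ and squaring once gives the polynomial relation
\begin{equation*}
B^{3}-(A^{3}+18A)B^{2}+27A^{2}B-729A=0,\qquad A:=r^{6},\quad B:=r_3^{6}.
\end{equation*}
By \eqref{R2} one has $A^{2}=MA-27$ and $B^{2}=NB-27$; using the first of these to depress the powers of $A$ makes the relation linear in $A$, so one may solve for $A$ and substitute back into $A^{2}-MA+27=0$ to remove $A$, and an identical step with $B$ then produces a polynomial identity $\Psi(M,N)=0$, the analogue of \eqref{S313}. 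Unlike \eqref{S21}, the equation \eqref{S22} is invariant only under the simultaneous change $(u,v)\mapsto(-u,-v)$, so $\Psi$ is invariant under $(M,N)\mapsto(-M,-N)$ and is therefore a polynomial in $M^{2}$, $N^{2}$ and the single product $MN$; it is this genuinely mixed term that will force the radicals appearing in \eqref{w3}. Throughout, the correct branch of every square root is pinned down by letting $q\to0$, where $r,r_3\to\infty$ and hence $A\approx M$, $B\approx N$.

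\medskip
Next I would rationalize. Here $M^{2}=(1+7u_1)^{6}(7u_1+1+1/u_1)$ and $N^{2}=(1+7u_3)^{6}(7u_3+1+1/u_3)$ are rational in $u_1,u_3$, whereas $MN=(1+7u_1)^{3}(1+7u_3)^{3}\sqrt{(7u_1+1+1/u_1)(7u_3+1+1/u_3)}$ still carries one radical. Substituting these into $\Psi(M,N)=0$, isolating the unique multiple of that radical, and squaring once more removes it; clearing denominators then gives a polynomial identity $G(u_1,u_3)=0$, which over $\mathbb{Q}[u_1,u_3]$ should split as the product of a low--degree ``physical'' factor and a (higher--degree) spurious cofactor. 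Setting $u_1=\sqrt{PQ}$, $u_3=\sqrt{P/Q}$ turns the physical factor into \eqref{w3}; observe that although \eqref{w3} exhibits the half--integer powers $\sqrt P,\sqrt Q,\sqrt{Q^{3}}$, these occur there only inside the block $(\sqrt P+7/\sqrt P)[(\sqrt{Q^{3}}+1/\sqrt{Q^{3}})+3(\sqrt Q+1/\sqrt Q)]$, which expands to $u_1^{2}/u_3+u_3^{2}/u_1+7u_1/u_3^{2}+7u_3/u_1^{2}+3(u_1+u_3+7/u_1+7/u_3)$, so that \eqref{w3} is in fact a rational identity in $u_1$ and $u_3$. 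Finally, as in the proof of \eqref{w2}, evaluating along $q_n=1/(1+n)\to0$ shows the spurious cofactor is nonvanishing there, which forces the physical factor --- and with it \eqref{w3} --- to hold identically for $|q|<1$.

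\medskip
The hard part will be executing this elimination in practice: the resultant steps produce very large polynomials (witness the auxiliary function $H(q)$ in the proof of \eqref{w2}), and the repeated squarings introduce both spurious factors and sign ambiguities. The two delicate bookkeeping points are (i) tracking the correct branch of each square root throughout, which is dictated by the $q\to0$ behaviour of $u_1$ and $u_3$, and (ii) correctly isolating the physical factor of $G(u_1,u_3)$ and verifying that its cofactor does not vanish near $q=0$.
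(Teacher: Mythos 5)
Your proposal is correct and follows essentially the same route as the paper, whose proof of \eqref{w3} is literally ``repeat the proof of \eqref{w2} with \eqref{S21} replaced by \eqref{S22}'': you combine \eqref{S22} (in the squared form $B^{3}-(A^{3}+18A)B^{2}+27A^{2}B-729A=0$ with $A=r^{6}$, $B=r_3^{6}$, which checks out) with \eqref{R2} at $q$ and $q^{3}$, eliminate, rationalize, factor, and identify the vanishing factor by the behaviour along $q_n=1/(1+n)$. Your added observations --- that \eqref{S22} is only invariant under the simultaneous sign change, so a genuine $MN$ cross term survives, and that the $\sqrt{P},\sqrt{Q}$ block of \eqref{w3} expands to a rational expression in $u_1,u_3$ --- are accurate and in fact make the sketch more informative than the paper's one-line proof.
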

\begin{proof}
The proof of \eqref{w3} is similar to the proof of the equation \eqref{w2}, except that in place of the equation \eqref{S21} we use \eqref{S22}.
\end{proof}
\begin{theorem}
\text{If}\ \ $P:=u_1 u_5$\ \ {and} \ \ $Q:=\dfrac{u_1}{u_5},$\ \ \  then
\begin{equation}\begin{split}\label{w5}
&Q^3+\frac{1}{Q^3}=30\left(Q^2+\frac{1}{Q^2}\right)+215\left(Q+\frac{1}{Q}\right)+\left(P^2+\frac{7^4}{P^2}\right)+270\\& +15\left(P+\frac{7^2}{P}\right)\left[\sqrt{Q}+\frac{1}{\sqrt{Q}}\right]^2+5\left(\sqrt{P^3}+\frac{7^3}{\sqrt P^3}\right)\\&+5\left(\sqrt{P}+\frac{7}{\sqrt P}\right)\left[6\left(\sqrt {Q^3}+\frac{1}{\sqrt {Q^3}}\right)+19\left(\sqrt {Q}+\frac{1}{\sqrt {Q}}\right)\right]. \end{split}\end{equation}
\end{theorem}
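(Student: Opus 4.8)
The plan is to run the argument behind the proof of \eqref{w2}, with the degree--$5$ modular equation \eqref{S23} playing the role that the degree--$2$ input \eqref{S21} plays there (exactly as the proof of \eqref{w3} uses the degree--$3$ input \eqref{S22} instead). In \eqref{S23} take $u=r$ and $v=r_5$, where $r=f_1/(q^{1/12}f_3)$ is the function from \eqref{uv} and $r_5:=f_5/(q^{5/12}f_{15})=r(q^5)$. Since the parametrisation \eqref{R2} is stated through sixth powers, the first step is to convert \eqref{S23} into a polynomial identity $G(r^6,r_5^6)=0$ between the sixth powers, the analogue of \eqref{S311}: writing $X=r^6$, $Y=r_5^6$ and $Z=(rr_5)^2=(XY)^{1/3}$, equation \eqref{S23} becomes $Y-X=\sqrt Z\,(Z^2+5Z+9)$; squaring once, reducing powers of $Z$ by $Z^3=XY$, and then eliminating $Z$ (for instance by a resultant with $Z^3-XY$) yields the desired $G(X,Y)=0$.

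Next I would apply \eqref{R2} at $q$ and, since \eqref{R2} is a formal power--series identity in $q$, also at $q^5$, obtaining
\begin{equation*}
r^6+\frac{27}{r^6}=M,\qquad r_5^6+\frac{27}{r_5^6}=N,
\end{equation*}
where $M=(1+7u)^3\sqrt{7u+1+1/u}$, $N=(1+7v)^3\sqrt{7v+1+1/v}$, with $u=qf_7f_{21}/(f_1f_3)$ and $v=u(q^5)=q^5f_{35}f_{105}/(f_5f_{15})$, that is, $u=1/u_1$ and $v=1/u_5$. Solving the two quadratics gives $r^6=(M\pm\sqrt{M^2-108})/2$ and $r_5^6=(N\pm\sqrt{N^2-108})/2$, just as in \eqref{S312}. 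Substituting these into $G(r^6,r_5^6)=0$ and rationalising — multiplying by the appropriate conjugates to clear $\sqrt{M^2-108}$ and $\sqrt{N^2-108}$ — produces a polynomial relation $H(M^2,N^2)=0$, the analogue of \eqref{S313}. Finally one substitutes $M^2=(1+7u)^6(7u+1+1/u)$ and $N^2=(1+7v)^6(7v+1+1/v)$, clears denominators, and factorises the resulting polynomial in $u,v$ (equivalently in $a=7u$, $b=7v$, as in the proof of \eqref{w2}). One factor is extraneous and is shown not to vanish by testing along the sequence $\{q_n\}=\{1/(1+n)\}$, so the complementary factor vanishes identically for $|q|<1$; re-expressing it through $P=u_1u_5$ and $Q=u_1/u_5$ — and noting that $u_1=\sqrt{PQ}$ and $u_5=\sqrt{P/Q}$, which is precisely why the half--integer powers $\sqrt P,\sqrt{P^3},\sqrt Q,\sqrt{Q^3}$ surface — gives \eqref{w5}.

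The only real difficulty is the computational bulk. Because the driving identity \eqref{S23} has degree $5$ rather than $2$ or $3$, the polynomial $G$ has higher degree, $H$ is correspondingly larger, and the polynomial in $a,b$ to be factored dwarfs the already page--long expression that appears in the proof of \eqref{w2}; carrying out the successive eliminations of radicals, performing the factorisation, correctly isolating the irreducible factor that corresponds to \eqref{w5}, and verifying that the spurious factor does not vanish along $q_n\to0$ is the delicate, computation--heavy heart of the argument, and in practice is handled with a computer algebra system. Everything else is routine bookkeeping.
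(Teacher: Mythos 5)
Your proposal follows exactly the paper's intended route: the paper's entire proof of \eqref{w5} is the remark that one repeats the proof of \eqref{w2} with \eqref{S23} in place of \eqref{S21}, and that is precisely what you carry out, including the correct identifications $u=1/u_1$, $v=1/u_5$, the use of \eqref{R2} at $q$ and $q^5$, and the final non-vanishing check along $q_n=1/(1+n)$. Your additional care in first reducing \eqref{S23} to a polynomial relation between the sixth powers before invoking \eqref{R2} is a sound elaboration of a step the paper leaves implicit.
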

\begin{proof}
	The proof of \eqref{w5} is similar to the proof of the equation \eqref{w2}, except that in place of the equation \eqref{S21} we use \eqref{S23}.
\end{proof}
\begin{theorem}
	The following identity holds:
	\begin{equation}\begin{split}\label{w7}
	&\frac{{u_1}^3}{{u_7}^3}=\frac{161{u_7}^2}{{u_1}}+\frac{112{u_7}^2}{{u_1}^2}+196\left({u_1}{u_7}+\frac{49}{{u_1}{u_7}}\right)+28\left({u_1}^2{u_7}^2+\frac{7^4}{{u_1}^2{u_7}^2}\right)+1372\\&
	+\frac{980{u_7}}{{u_1}}+\frac{1127{u_7}}{{u_1}^2}+\left({u_1}^4{u_7}^2+\frac{7^6}{{u_1}^2{u_7}^4}\right)+539\left({u_7}+\frac{7}{{u_1}}\right)+343\left({u_1}+\frac{7}{{u_7}}\right)\\&
	+77\left({u_7}^2{u_1}+\frac{7^3}{{u_1}^2{u_7}}\right)+49\left({u_7}{u_1}^2+\frac{7^3}{{u_1}{u_7}^2}\right)+7\left({u_7}^2{u_1}^3+\frac{7^5}{{u_1}^2{u_7}^3}\right)\\&
	+49\left({u_1}^2+\frac{7^2}{{u_7}^2}\right)+140\left({u_7}^2+\frac{7^2}{{u_1}^2}\right)+7\left({u_1}^3{u_7}+\frac{7^4}{{u_1}{u_7}^3}\right)+\frac{343{u_1}}{{u_7}}.
	 \end{split}\end{equation}
\end{theorem}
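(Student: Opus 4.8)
The plan is to establish \eqref{w7} by the same mechanism used for \eqref{w2}. Put $r:=f_1/(q^{1/12}f_3)$ and $s:=f_7/(q^{7/12}f_{21})$; thus $s$ is $r$ with $q$ replaced by $q^7$, and in this language \eqref{w7} is essentially a modular equation of degree $7$ for the single function $u_1$, with $u_7$ in the role of $u_1(q^7)$. The modular equation \eqref{deg3}, which is precisely a relation between $r$ and $s$, will play here the part that \eqref{S21} plays in the proof of \eqref{w2}. The ingredients I would assemble are: (i) \eqref{deg3} itself, which (with $P=r$, $Q=s$) becomes a polynomial relation $F(r,s)=0$ on clearing denominators; (ii) \eqref{R2}, by which $r^6$ is a root of $t^2-Mt+27=0$ with $M:=(1+7u_1)^3\sqrt{7u_1+1+1/u_1}$, so $r^6=\tfrac12\!\left(M\pm\sqrt{M^2-108}\right)$; and (iii) the $q\mapsto q^7$ image of \eqref{R2}, by which $s^6=\tfrac12\!\left(N\pm\sqrt{N^2-108}\right)$ with $N:=(1+7u_7)^3\sqrt{7u_7+1+1/u_7}$, the identification $u_1(q^7)=u_7$ being the one forced by the conventions of \eqref{uv}. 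Here \eqref{R3} enters as a second expression for the same quantity, $s^6+27/s^6=(1+1/u_1)^3\sqrt{7u_1+1+1/u_1}$, and it is the equality of this with the $q\mapsto q^7$ image of \eqref{R2} that first couples $u_1$ to $u_7$.

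The elimination then runs in three stages, mirroring the proof of \eqref{w2}. First, raise \eqref{deg3} to a suitable power and symmetrize it under the involutions $r\mapsto 3/r$ and $s\mapsto 3/s$, producing a relation $G(r^6,s^6)=0$ invariant both under $r^6\leftrightarrow 27/r^6$ and under $s^6\leftrightarrow 27/s^6$ (the analogue of ``cubing \eqref{S21}''). Second, substitute the values of $r^6$ and $s^6$ from (ii) and (iii); by the symmetry just built in, the radicals $\sqrt{M^2-108}$ and $\sqrt{N^2-108}$ drop out and one is left with a polynomial identity $H(M,N)=0$ involving only even powers of $M$ and $N$, the analogue of \eqref{S313}. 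Third, insert $M^2=(1+7u_1)^6(7u_1+1+1/u_1)$ and $N^2=(1+7u_7)^6(7u_7+1+1/u_7)$, clear denominators, and factor; as for \eqref{w2}, the resulting polynomial in $u_1,u_7$ splits as (the cleared form of \eqref{w7})$\,\times\,$(an extraneous factor $\Lambda(q)$). Finally, one checks that the first factor vanishes along $q_n=1/(1+n)$ while $\Lambda(q)$ does not --- equivalently, by matching the leading terms of the $q$--expansions of $f_1,f_3,f_7,f_{21},f_{49},f_{147}$ --- so that \eqref{w7} holds for all $|q|<1$. A reassuring internal check is that both sides of \eqref{w7} are invariant under $(u_1,u_7)\mapsto(7/u_7,\,7/u_1)$, the symmetry one expects of a degree--$7$ mixed modular equation.

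The principal obstacle is the sheer bulk of the elimination: the intermediate polynomials will be of the same order of magnitude as the monster $H(q)$ displayed in the proof of \eqref{w2}. Within that, two points are genuinely delicate. (a) One must carry out the first stage so that \eqref{deg3} --- which natively contains $rs$, $(s/r)^2$ and $(s/r)^4$, i.e.\ powers of $r$ and $s$ that are not multiples of $6$ --- is recast as a relation in $r^6$ and $s^6$ alone, carrying the full $\mathbb Z/2\times\mathbb Z/2$ Galois symmetry; without this the radicals introduced by \eqref{R2} do not cancel. (b) One must keep careful track of the inversions dictated by \eqref{uv} and by the definition of $u_r$, so that $u_1(q^7)$ is matched to $u_7$ itself rather than to $1/u_7$; otherwise the surviving factor comes out as a M\"obius--twisted version of \eqref{w7} instead of \eqref{w7} with the coefficients $161,\,112,\,196,\,28,\,1372,\ldots$ exactly as written. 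Once the factorization is in hand, isolating $\Lambda(q)$ and verifying $\Lambda\not\equiv 0$ is routine.
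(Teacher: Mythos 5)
You take a long detour to a relation that the paper obtains in one step, and the one step you need is already sitting inside your item (iii). Since $s(q)=r(q^{7})$, the left-hand side of \eqref{R2} after $q\mapsto q^{7}$ coincides with the left-hand side of \eqref{R3}; equating the two right-hand sides and squaring (the radicals on the two sides are different, so nothing cancels --- one simply squares the whole equation) gives at once a polynomial relation between $u(q)$ and $u(q^{7})$, i.e.\ between $u_1$ and $u_7$, which is \eqref{w7}. That is the paper's entire proof (the paper states the substitution with the roles of \eqref{R2} and \eqref{R3} interchanged, which does not typecheck literally; your reading is the correct one). Everything else in your plan --- invoking \eqref{deg3}, solving the quadratics for $r^{6}$ and $s^{6}$, cancelling radicals, factoring a huge polynomial, and testing factors on $q_n=1/(1+n)$ --- is unnecessary here, and the one stage that is genuinely new relative to the proof of \eqref{w2} is also the one you leave unresolved: \eqref{deg3} contains the monomials $rs$, $(s/r)^{2}$ and $(s/r)^{4}$, whose exponents are not multiples of $6$, so turning it into a relation in $r^{6}$ and $s^{6}$ is not an ``analogue of cubing'' but requires an elimination (a resultant over sixth roots of unity) that you flag in point (a) and never carry out. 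As written the proposal is therefore both heavier than needed and incomplete at that step; discard the elimination scaffolding, keep the coupling observation from item (iii), square, and clear denominators.
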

\begin{proof}
Replacing $q$ to $q^7$ in \eqref{R3} and equating with \eqref{R2}, we arrive at \eqref{w7}.
\end{proof}
\begin{theorem}
	\text{If}\ \ $P:=w_1 w_2$\ \ {and} \ \ $Q:=\dfrac{w_1}{w_2},$\ \ \  then
	\begin{equation}\begin{split}\label{u2}
	&P+\frac{1}{P}=Q^3+\frac{1}{Q^3}+4\left(Q+\frac{1}{Q}\right). \end{split}\end{equation}
\end{theorem}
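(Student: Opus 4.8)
The plan is to eliminate the auxiliary modular function $u$ of Lemma~\ref{lem0} between two instances of \eqref{R1} and the identity \eqref{w2}. Since $w_1$ is $w$ and $w_2$ is $w$ with $q$ replaced by $q^{2}$ (and likewise $u_1$, $u_2$ are $u$ at $q$ and at $q^{2}$), \eqref{R1} applied at $q$ and at $q^{2}$ gives
\[
w_1^{2}+\frac{1}{w_1^{2}}=7u_1+\frac{1}{u_1}+3,\qquad
w_2^{2}+\frac{1}{w_2^{2}}=7u_2+\frac{1}{u_2}+3 .
\]
Reading each relation as a quadratic, $u_1$ is a root of $7u_1^{2}-(w_1^{2}+w_1^{-2}-3)u_1+1=0$ and $u_2$ of the analogous quadratic in $w_2^{2}$; hence $u_1$ and $u_2$ are explicit (quadratic-irrational) functions of $w_1^{2}$ and $w_2^{2}$, with discriminants $(w_1^{2}+w_1^{-2}-3)^{2}-28$ and $(w_2^{2}+w_2^{-2}-3)^{2}-28$.

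Before eliminating, I would first put the target \eqref{u2} into polynomial form. From $P=w_1w_2$ and $Q=w_1/w_2$ we get $w_1^{2}=PQ$ and $w_2^{2}=P/Q$ (all positive for $0<q<1$). Writing $A:=w_1^{2}=PQ$ and $B:=w_2^{2}=P/Q$ and multiplying \eqref{u2} through by $P^{3}=(AB)^{3/2}$ (using $P=\sqrt{AB}$ and $Q=\sqrt{A/B}$) turns \eqref{u2} into the equivalent polynomial relation
\[
A^{2}B^{2}+AB=A^{3}+B^{3}+4AB(A+B),
\]
which can also be written $(A+B)^{3}=AB(A-1)(B-1)$. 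It therefore suffices to prove this relation between $A=w_1^{2}$ and $B=w_2^{2}$.

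The bridge between $u_1$ and $u_2$ is \eqref{w2}. Substituting into \eqref{w2} the two quadratic expressions for $u_1$ and $u_2$ found above, isolating and squaring to clear the two independent radicals, and simplifying, one is left with a polynomial identity in $A$ and $B$. Exactly as in the proofs of \eqref{w2}, \eqref{w3} and \eqref{w5}, I expect this to factor as a product of $A^{3}+B^{3}+4AB(A+B)-A^{2}B^{2}-AB$ (the displayed relation) and a bulky second factor; one then checks that the second factor does not vanish along the sequence $\{q_n\}=\{1/(1+n)\}$, so that, the whole product being identically $0$ while the bulky factor is not, the first factor vanishes identically on $|q|<1$. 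This establishes the reformulated identity, and hence \eqref{u2}.

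The real work is precisely this elimination. Because \eqref{R1} delivers $u_1$ and $u_2$ only as roots of quadratics, substituting them into \eqref{w2} brings in two independent square roots, and removing them by successive squarings inflates the degree considerably and spawns a large extraneous factor --- the same technical nuisance handled by the $q_n\to 0$ check in the earlier theorems. One could instead bypass \eqref{w2} and argue directly from classical modular equations --- for instance \eqref{S21} at $q$ and at $q^{7}$, together with \eqref{deg3}, \eqref{R2} and \eqref{R3} --- but this introduces more auxiliary variables, so routing through \eqref{R1} and \eqref{w2} is the most economical.
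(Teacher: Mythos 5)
Your proposal is correct and follows essentially the same route as the paper: the paper likewise solves \eqref{R1} at $q$ and $q^2$ to express $u_1,u_2$ as quadratic irrationalities in $w_1^2,w_2^2$, substitutes into \eqref{w2}, eliminates the radicals, factors, and isolates the right factor by checking the sequence $q_n=1/(1+n)$. Your polynomial reformulation $A^3+B^3+4AB(A+B)=A^2B^2+AB$ with $A=w_1^2$, $B=w_2^2$ is precisely the second factor the paper obtains in \eqref{w24}.
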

\begin{proof}
	To begin with let us set
	\begin{equation}\label{w21}
	A:=w_1^2+\frac{1}{w_1^2}\,\,\text{and}\,\,B:=w_2^2+\frac{1}{w_2^2}.
	\end{equation}
	Observe that from \eqref{R1}, we have
	\begin{equation}\label{w22}
	u_{1}=\frac{A-3+a}{14}\,\,\,\text{and}\,\,\,u_{2}=\frac{B-3+b}{14},
	\end{equation}
	where $a:=(A-3)^2-28$ and $b:=(B-3)^2-28.$
	
	Using \eqref{w22} in \eqref{w2} and upon factorizing, we get
	\begin{equation}\label{w23}\begin{split}
	&4aw_1^4w_2^8+aw_1^2w_2^6-aw_1^{10}w_2^8-3aw_1^6w_2^4-9w_1^4w_2^6-9w_1^6w_2^4+2aw_1^6w_2^2\\&
	-7aw_1^4w_2^6+4aw_1^4w_2^4-aw_1^2w_2^4+4w_1^4w_2^2+4w_1^2w_2^4-19w_1^4w_2^4-36w_1^6w_2^6\\&
	-9w_1^8w_2^6-9w_2^8w_1^6-19w_1^8w_2^8-19w_1^8w_2^4-19w_1^4w_2^8+4w_1^{10}w_2^8+4w_1^{10}w_2^4\\&
	+4w_1^2w_2^8+4w_1^8w_2^{10}+4w_1^8w_2^2+4w_1^4w_2^{10}-w_1^{10}w_2^{10}-9w_1^{10}w_2^6-w_1^{10}w_2^2\\&
	-9w_1^6w_2^{10}-9w_1^6w_2^2-w_1^2w_2^{10}-9w_1^2w_2^6+2w_2^6w_1^{12}+2w_1^6w_2^{12}+8aw_1^6w_2^6\\&
	+2aw_1^6w_2^{10}-7aw_1^8w_2^6+aw_1^{10}w_2^6-3aw_1^6w_2^8+4aw_1^8w_2^8+4aw_1^8w_2^4-w_1^2w_2^2\\&
	-aw_1^{10}w_2^4-aw_1^2w_2^8+2w_1^6+2w_2^6+(aw_1^6w_2^4-3w_1^4w_2^6-7w_1^6w_2^4+aw_1^4w_2^6\\&
	-aw_1^4w_2^4-w_1^4w_2^2+4w_1^4w_2^4+8w_1^6w_2^6-3w_1^8w_2^6-7w_2^8w_1^6+4w_1^8w_2^8+4w_1^8w_2^4\\&
	+4w_1^4w_2^8-w_1^8w_2^{10}10-w_1^8w_2^2-w_1^4w_2^{10}+2w_1^{10}w_2^6+w_1^6w_2^{10}+w_1^6w_2^2+2w_1^2w_2^6\\&
	-4aw_1^6w_2^6+aw_1^8w_2^6+aw_1^6w_2^8-aw_1^8w_2^8-aw_1^8w_2^4-aw_1^4w_2^8)b=0,
	\end{split}\end{equation}
	Eliminating $b$ from the above equation, we get 
	\begin{equation}\label{w24}
	\begin{split}
	&(w_1^6w_2^6+1-w_1^4w_2^2+4w_1^2w_2^2+4w_1^4w_2^4-w_1^2w_2^4)(w_1^6+4w_1^4w_2^2-w_1^4w_2^4\\&
	+4w_1^2w_2^4-w_1^2w_2^2+w_2^6)(1+9w_2^4+18w_2^6-9w_2^2+9w_2^8-9w_2^{10}+w_2^{12}\\&+aw_2^2-6aw_2^8-6aw_2^4+aw_2^{10}+4aw_2^6)=0
	\end{split}
	\end{equation}
	We find that the second factor of \eqref{w24} vanishes and other factors does not vanish for the sequence $\displaystyle \{q_n\}=\lb\{\frac{1}{1+n}\rb\}$.
	Hence, second factor is identically equal to zero on $|q|<1$. By setting $P:=w_1 w_2$\ \ {and} \ \ $Q:=\dfrac{w_1}{w_2}$, we complete the proof.
\end{proof}
\begin{theorem}
\text{If}\ \ $P:=w_1 w_3$\ \ {and} \ \ $Q:=\dfrac{w_1}{w_3},$\ \ \  then
\begin{equation}\begin{split}\label{u3}
&P^2+\frac{1}{P^2}=Q^2+\frac{1}{Q^2}+\left(P+\frac{1}{P}\right)\left[3+\left(Q+\frac{1}{Q}\right)^2+3\left({Q}+\frac{1}{ Q}\right)\right]+1. \end{split}\end{equation}
\end{theorem}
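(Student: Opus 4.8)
The plan is to mirror the method used for Theorem \eqref{u2}, since the present statement has exactly the same flavour: a ``mixed'' relation obtained by feeding a known identity in $u$ (here Theorem \eqref{w2}, the degree-$21$ relation between $u_1$ and $u_2$, applied at a different base) into the parametrization of $u_r$ in terms of $w_r$ supplied by \eqref{R1}. Concretely, I would set
\begin{equation*}
A:=w_1^2+\frac{1}{w_1^2}\qquad\text{and}\qquad B:=w_3^2+\frac{1}{w_3^2},
\end{equation*}
and use \eqref{R1} (with $q\mapsto q$ and $q\mapsto q^{3}$) to write
\begin{equation*}
u_1=\frac{A-3+a}{14},\qquad u_3=\frac{B-3+b}{14},\qquad a:=\sqrt{(A-3)^2-28},\quad b:=\sqrt{(B-3)^2-28}.
\end{equation*}
The key input that links $u_1$ and $u_3$ is \emph{not} Theorem \eqref{w2} directly (which relates $u_1$ and $u_2$); rather it is the degree-$3$ companion relating $u_1$ and $u_3$. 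I would first establish, exactly as in Theorem \eqref{w2} but using \eqref{S22} in place of \eqref{S21} (i.e. essentially the content of Theorem \eqref{w3}), a polynomial identity $F(u_1,u_3)=0$. Substituting the above expressions for $u_1,u_3$ into $F$ converts it into an algebraic relation in $w_1,w_3,a,b$.

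Next I would \emph{rationalize}: $F=0$ becomes, after collecting, an equation of the form $G_0(w_1,w_3,a)+G_1(w_1,w_3,a)\,b=0$ for polynomials $G_0,G_1$ (linear in $b$ because $b^2=(B-3)^2-28$ is a polynomial in $w_3$). Isolating $b$ and squaring, then using $b^2=(B-3)^2-28$, eliminates $b$ and yields a polynomial identity purely in $w_1$ and $w_3$. The decisive step is then to \emph{factor} that polynomial. As in \eqref{w24}, I expect it to split into several factors, one of which is the ``genuine'' degree-$21$ mixed modular equation and the others spurious; in terms of $P=w_1w_3$ and $Q=w_1/w_3$ the target factor should be exactly
\begin{equation*}
P^2+\frac{1}{P^2}-Q^2-\frac{1}{Q^2}-\left(P+\frac{1}{P}\right)\left[3+\left(Q+\frac{1}{Q}\right)^2+3\left(Q+\frac{1}{Q}\right)\right]-1.
\end{equation*}
To decide which factor actually vanishes identically for $|q|<1$, I would evaluate all factors numerically along the sequence $q_n=\frac{1}{1+n}$ (using the $q$-product definitions of $f_1,f_3,f_7,f_{21}$, hence of $w_1,w_3$), observe that only the target factor is numerically zero, and conclude by analyticity (a power series in $q$ vanishing on a sequence accumulating at $0$ is identically zero) that this factor vanishes on $|q|<1$. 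Rewriting it in terms of $P$ and $Q$ gives \eqref{u3}.

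\textbf{Main obstacle.} The arithmetic, not the ideas, is the bottleneck: after the substitution and the elimination of $b$ (which squares the degree), the polynomial in $w_1,w_3$ is enormous—comparable to the $H(q)$ monster displayed in the proof of Theorem \eqref{w2}—and teasing out the correct factorization is the delicate part. I would handle this with a computer algebra system: factor over $\mathbb{Q}$, identify the low-degree factor that is symmetric under $w_1\leftrightarrow 1/w_1$, $w_3\leftrightarrow 1/w_3$ in the way dictated by the modular transformation $q\mapsto 1/q$-type symmetry, and verify the remaining cofactor is bounded away from $0$ on the test sequence. A secondary subtlety is the sign ambiguity in $a$ and $b$ (the square roots): one must check that the branch forced by $u_1,u_3>0$ for real $0<q<1$ is consistent throughout, which again is confirmed on the numerical sequence. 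Once the right factor is isolated, re-expressing it via $P$ and $Q$ and matching coefficients with the claimed right-hand side is routine.
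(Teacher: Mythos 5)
Your proposal follows essentially the same route as the paper: the paper proves \eqref{u3} by repeating the proof of \eqref{u2} verbatim with \eqref{w3} (the $u_1$--$u_3$ relation) in place of \eqref{w2}, i.e.\ parametrizing $u_1,u_3$ via \eqref{R1}, eliminating the radical, factorizing, and isolating the vanishing factor numerically on the sequence $q_n=\frac{1}{1+n}$. Your reading of the radical as $a=\sqrt{(A-3)^2-28}$ is in fact the correct one (the paper's displayed $a:=(A-3)^2-28$ omits the square root), so no gap remains.
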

\begin{proof}
	The proof of \eqref{u3} is similar to the proof of the equation \eqref{u2}, except that in place of the equation \eqref{w2} we use \eqref{w3}.
\end{proof}
\begin{theorem}
\text{If}\ \ $P:=F_1 F_5$\ \ {and} \ \ $Q:=\dfrac{F_1}{F_5},$\ \ \  then
\begin{equation}\begin{split}\label{u5}
&P^2+\frac{1}{P^2}=Q^3+\frac{1}{Q^3}+5\left(Q^2+\frac{1}{Q^2}\right)+20\left(Q+\frac{1}{Q}\right)\\&+5\left(P+\frac{1}{P}\right)\left[\left(Q+\frac{1}{Q}\right)+2\right]+35. 
\end{split}\end{equation}
\end{theorem}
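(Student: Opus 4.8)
The plan is to argue exactly as for \eqref{u2}, with \eqref{w5} playing the role of \eqref{w2}. Here $F_1$ and $F_5$ are linked to $u_1$ and $u_5$ through \eqref{R1}, applied with $q$ and with $q^5$ respectively, in the same way that $w_1$ and $w_2$ were linked to $u_1$ and $u_2$ in the proof of \eqref{u2}.

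First I would set $A:=F_1^2+1/F_1^2$ and $B:=F_5^2+1/F_5^2$. By \eqref{R1} (with $q$ and with $q^5$), $u_1$ and $u_5$ are roots of $7x^2-(A-3)x+1=0$ and $7x^2-(B-3)x+1=0$ respectively, so that
$$u_1=\frac{A-3+a}{14},\qquad u_5=\frac{B-3+b}{14},$$
where $a=\pm\sqrt{(A-3)^2-28}$ and $b=\pm\sqrt{(B-3)^2-28}$, the sign of each radical being fixed by the behaviour of both sides as $q\to0^+$; this is the exact analogue of \eqref{w22}.

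Next I would substitute these formulas into \eqref{w5}. The useful preliminary observation is that, with $P=u_1u_5$ and $Q=u_1/u_5$, every cross product $\sqrt{P^{\pm1}}\,\sqrt{Q^{\pm1}}$ and $\sqrt{P^{\pm1}}\,\sqrt{Q^{\pm3}}$, as well as $\bigl[\sqrt Q+1/\sqrt Q\bigr]^2=Q+2+1/Q$, is already a rational function of $u_1$ and $u_5$ for $0<q<1$ (for instance $\sqrt P\,\sqrt{Q^3}=u_1^2/u_5$ and $\sqrt P/\sqrt Q=u_5$). Hence the only genuinely irrational term in \eqref{w5} is $5\bigl(\sqrt{P^3}+7^3/\sqrt{P^3}\bigr)$; isolating it and squaring turns \eqref{w5} into a polynomial identity $G(u_1,u_5)=0$. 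Feeding in the formulas for $u_1$ and $u_5$ and then clearing the radicals $a$ and $b$ in turn --- the relation being linear in $b$ over the ring generated by $a$, and then linear in $a$, precisely as \eqref{w23} was reduced to \eqref{w24} --- produces a polynomial in $F_1$ and $F_5$ that factors. As in the earlier proofs, exactly one factor vanishes along the sequence $\{q_n\}=\{1/(1+n)\}$ while the others stay bounded away from $0$; since each factor is analytic at $q=0$ and $q_n\to0$, that factor must vanish identically. Rewriting it in the variables $P=F_1F_5$ and $Q=F_1/F_5$ organizes it into the palindromic shape $P^2+1/P^2=\cdots$, which is \eqref{u5}.

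The heart of the argument, and the main obstacle, is this middle step: because \eqref{w5} already carries the radical $\sqrt{P^3}$, squaring it away and then eliminating $a$ and $b$ inflates the total degree substantially, so that separating the wanted factor \eqref{u5} from the parasitic ones is a sizeable factorization, carried out in practice with a computer algebra system exactly as for \eqref{w2}. The only other point requiring care is the consistent choice of the signs of $a$ and $b$, which is dictated by the $q\to0^+$ asymptotics.
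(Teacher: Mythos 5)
Your proposal matches the paper's approach: the paper's entire proof of \eqref{u5} is the one-line statement that it is ``similar to the proof of \eqref{u2}, except that in place of \eqref{w2} we use \eqref{w5},'' and your argument is precisely that template (express $u_1,u_5$ via \eqref{R1} in terms of $F_1,F_5$, substitute into \eqref{w5}, clear radicals, factorize, and isolate the vanishing factor along $q_n=1/(1+n)$), carried out with more care than the paper itself, in particular regarding the extra radical $\sqrt{P^3}$ and the sign choices for $a$ and $b$. Your reading of the undefined symbols $F_1,F_5$ as the degree-$5$ analogues of $w_1,w_2$ is the intended one.
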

\begin{proof}
	The proof of \eqref{u5} is similar to the proof of the equation \eqref{u2}, except that in place of the equation \eqref{w2} we use \eqref{w5}.
\end{proof}
\begin{theorem}
	\text{If}\ \ $P:=F_1 F_7$\ \ {and} \ \ $Q:=\dfrac{F_1}{F_7},$\ \ \  then
	\begin{equation}\begin{split}\label{u7} 
	&P^6+\frac{1}{P^6}+Q^4+\frac{1}{Q^4}=49\left(Q^2+\frac{1}{Q^2}\right)-28\left(Q+\frac{1}{Q}\right)-464\\&+\left(P^5+\frac{1}{P^5}\right)\left[29+7\left(Q+\frac{1}{Q}\right)\right]
	-\left(P^4+\frac{1}{P^4}\right)\left[106+14\left(Q+\frac{1}{Q}\right)\right]\\&
	+\left(P^3+\frac{1}{P^3}\right)\left[183+21\left(Q+\frac{1}{Q}\right)-7\left(Q^3+\frac{1}{Q^3}\right)+\left(Q^4+\frac{1}{Q^4}\right)\right]\\&
	-\left(P^2+\frac{1}{P^2}\right)\left[260+28\left(Q+\frac{1}{Q}\right)-49\left(Q^2+\frac{1}{Q^2}\right)+\left(Q^4+\frac{1}{Q^4}\right)\right]\\&
	+\left(P+\frac{1}{P}\right)\left[337+35\left(Q+\frac{1}{Q}\right)-49\left(Q^2+\frac{1}{Q^2}\right)+\left(Q^4+\frac{1}{Q^4}\right)\right].
	\end{split}\end{equation}
\end{theorem}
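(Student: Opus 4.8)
The plan is to obtain \eqref{u7} from the relation \eqref{w7} in exactly the way \eqref{u2}, \eqref{u3} and \eqref{u5} were obtained from \eqref{w2}, \eqref{w3} and \eqref{w5}: one uses \eqref{R1}, taken once in $q$ and once in $q^{7}$, as a dictionary between $u_1,u_7$ and $w_1,w_7$ (the functions written $F_1,F_7$ in the statement). Thus, at one level, the proof of \eqref{u7} is simply ``similar to the proof of \eqref{u2}, except that in place of \eqref{w2} we use \eqref{w7}''; what follows spells out what that entails and where the work lies.

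Set $A:=w_1^{2}+1/w_1^{2}$ and $B:=w_7^{2}+1/w_7^{2}$; since $w_1^{2}=PQ$ and $w_7^{2}=P/Q$, these are rational functions of $P$ and $Q$, which is what makes the eventual translation into the variables $P$ and $Q$ painless. From \eqref{R1} and its image under $q\mapsto q^{7}$, namely $7u_1+1/u_1+3=A$ and $7u_7+1/u_7+3=B$, one solves the two resulting quadratics to get $u_1=(A-3+a)/14$ and $u_7=(B-3+b)/14$, where $a^{2}=(A-3)^{2}-28$ and $b^{2}=(B-3)^{2}-28$ and the sign of each radical is fixed by matching the leading behaviour of the two sides as $q\to 0$. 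Substituting these into \eqref{w7}, cleared of denominators, produces a relation of the shape $R_{0}+R_{1}a+R_{2}b+R_{3}ab=0$ with $R_{0},R_{1},R_{2},R_{3}$ polynomials in $w_1,w_7$; eliminating the radicals by means of $a^{2}$ and $b^{2}$ then gives an honest polynomial identity $\Phi(w_1,w_7)=0$. Factoring $\Phi$, exactly as in the proofs of \eqref{w2} and \eqref{u2}, one of the factors --- after division by an appropriate power of $w_1w_7$ and rewriting via $P=w_1w_7$, $Q=w_1/w_7$ --- is the difference of the two sides of \eqref{u7}, while the remaining factors are extraneous; the genuine factor is isolated by checking numerically along the sequence $q_{n}=1/(1+n)$ which factor vanishes for every $n$, for that factor is then identically zero on $|q|<1$, the others staying bounded away from $0$. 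Rewriting it in $P$ and $Q$ yields \eqref{u7}.

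All the substance is in the last two steps, and the obstacle is purely the size of the computation rather than any conceptual difficulty. Equation \eqref{w7} is already a long polynomial relation in $u_1,u_7$, and substituting the quadratic roots and clearing the two radicals raises the degree substantially, so $\Phi$ is a polynomial of high degree in $w_1,w_7$ with very large integer coefficients (consistent with the appearance of $P^{6}$ and $Q^{4}$ in \eqref{u7}), and both its factorisation and the identification of the correct factor are realistically carried out only with a computer algebra system. The one point that requires care, as opposed to mere stamina, is the choice of the two square-root branches: each choice leads to a valid polynomial identity $\Phi=0$, but the identically vanishing factor changes with the branch, so before factoring one should compare the first several coefficients of the $q$-expansions of $u_1$ and $u_7$ with those of the substituted expressions to be sure the right branches have been taken.
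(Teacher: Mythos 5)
Your proposal follows exactly the paper's route: the paper proves \eqref{u7} by declaring it ``similar to the proof of \eqref{u2}, except that in place of \eqref{w2} we use \eqref{w7}'', and the proof of \eqref{u2} is precisely the procedure you describe --- express $u_1,u_7$ in terms of $A=w_1^2+1/w_1^2$, $B=w_7^2+1/w_7^2$ via \eqref{R1}, substitute into \eqref{w7}, clear radicals, factor, and isolate the identically vanishing factor by testing along $q_n=1/(1+n)$. Your explicit attention to the choice of square-root branches (the paper writes $a:=(A-3)^2-28$ where it plainly means the square root) is a welcome clarification, but the argument is the same.
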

\begin{proof}
	The proof of \eqref{u7} is similar to the proof of the equation \eqref{u2}, except that in place of the equation \eqref{w2} we use \eqref{w7}.
\end{proof}
\begin{remark}
	For another proof of \eqref{w2} and \eqref{u2} one can look into \cite{BAM1}.
\end{remark}
\section{Explicit evaluations of ratios of $f(-q)$ and $f(q)$}\label{Se4}
In \cite{jy3}, J. Yi introduced two parameterizations $r_{k,n}$ and $r'_{k,n}$ as follows:
\begin{equation}\label{rkn}
r_{k,n}=\frac{f(-q)}{k^{1/4}q^{(k-1)/24}f({-q^k})},\,\,\,\textrm{where}\,\,\,q:=e^{-2 \pi\sqrt{n/k}}
\end{equation}
and
\begin{equation}\label{rkkn}
r'_{k,n}=\frac{f(q)}{k^{1/4}q^{(k-1)/24}f({q^k})},\,\,\,\textrm{where}\,\,\,q:=e^{-\pi\sqrt{n/k}}.
\end{equation}
and established several properties as well as explicit evaluations of $r_{k,n}$ and $r'_{k,n}$ for different positive rational values of $n$ and $k$.
In the following lemmas \ref{S27}--\ref{S29}, we list few properties of $r_{k,n}$ which will be used in sequel of this section to establish several new explicit evaluations of $r_{k,n}$ and $r'_{k,n}$.

\begin{lemma}\cite{jy3}\label{S27} For all positive real numbers $k$ and $n$,
\begin{enumerate}
\item[i)] $r_{k,1/n}\ r_{k,n}=1 \ \ and \ \ r'_{k,1/n}\ r'_{k,n}=1$,
\item [ii)] $r_{k,n}=r_{n,k} \ \ and \ \ r'_{k,n}=r'_{n,k}$.
\end{enumerate}
\end{lemma}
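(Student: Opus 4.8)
The final statement to prove is Lemma~\ref{S27}, recording two basic reciprocity and symmetry properties of Yi's parameters $r_{k,n}$ and $r'_{k,n}$. Both parts are really statements about the behaviour of $f(-q)$ under the modular transformation $q\mapsto \tilde q$, so the plan is to reduce everything to the standard transformation formula for $f(-q)$ (equivalently, for the Dedekind $\eta$-function) and then do a short computation with the exponents.

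\textbf{Part (i): the reciprocity $r_{k,1/n}\,r_{k,n}=1$.} Recall $r_{k,n}$ uses $q=e^{-2\pi\sqrt{n/k}}$. First I would introduce the classical transformation law: if $q=e^{-2\pi s}$ with $\mathrm{Re}\,s>0$ and $\tilde q=e^{-2\pi/s}$, then $q^{1/24}f(-q)$ transforms as a weight-$1/2$ modular form, i.e. $s^{1/4}\,q^{1/24}f(-q)$ equals a constant times $\tilde q^{1/24}f(-\tilde q)$ after the appropriate substitution. Writing this out carefully with $s=\sqrt{n/k}$ sends $q=e^{-2\pi\sqrt{n/k}}$ to the modulus $e^{-2\pi\sqrt{k/n}}$, which is exactly the modulus defining $r_{k,1/n}$ (since $1/n$ in the role of $n$ gives $e^{-2\pi\sqrt{(1/n)/k}}=e^{-2\pi\sqrt{1/(nk)}}$)—so I need to track this substitution precisely. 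The cleanest route: write both $f(-q)$ and $f(-q^k)$ appearing in $r_{k,n}$ in terms of their values at the reciprocal modulus using the $\eta$-transformation, form the product $r_{k,n}\,r_{k,1/n}$, and watch the powers of $q$, the factors of $k^{1/4}$, and the automorphy constants cancel. The $f(q)/f(q^k)$ case is identical once one uses $f(q)=f(-q)$-type identities in the form $\varphi(-q)/\ldots$, or more directly the $\eta$-transformation again with the half-integral shift corresponding to $q=e^{-\pi\sqrt{n/k}}$.

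\textbf{Part (ii): the symmetry $r_{k,n}=r_{n,k}$.} This is the more structural of the two and I expect it to be the real content. The key observation is that $r_{k,n}$ should admit a description that is manifestly symmetric in $k$ and $n$. One standard way: $r_{k,n}^{24}$ (or a fixed power) can be written as a ratio of Dedekind $\eta$-values at $\tau$ and $k\tau$ with $\tau=i\sqrt{n/k}$; applying the transformation $\tau\mapsto -1/(k\tau)$ sends this configuration to the one defining $r_{n,k}$, because $-1/(k\cdot i\sqrt{n/k}) = i\sqrt{k/n}\,/\,\ldots$ works out to $i\sqrt{k/n}$ up to the factor $k$ being swapped with $n$. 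Concretely I would: (a) express $r_{k,n}$ via $\eta(\tau)/\eta(k\tau)$ with the correct $q$-power absorbed, (b) apply $\eta(-1/\tau)=\sqrt{-i\tau}\,\eta(\tau)$ to both numerator and denominator after the substitution $\tau\to -1/(k\tau)$, and (c) simplify, checking that the weight-$1/4$ automorphy factors and the power of $k$ combine to leave exactly $r_{n,k}$. The primed version $r'_{k,n}$ follows by the same argument on the relevant congruence subgroup, tracking the extra factor of $2$ hidden in $q=e^{-\pi\sqrt{n/k}}$ versus $e^{-2\pi\sqrt{n/k}}$.

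\textbf{Expected main obstacle.} The genuinely delicate point is bookkeeping: getting the automorphy multiplier system exactly right so that \emph{all} constants (the $24$th roots of unity from the $\eta$-transformation, the $\sqrt{-i\tau}$ weight factors, and the normalizing $k^{1/4}q^{(k-1)/24}$) cancel cleanly and leave $1$ in part (i) and the swapped parameter in part (ii), with no stray phase. Since $r_{k,n}$ is defined for real positive $k,n$ where all quantities are positive real, the phases \emph{must} cancel, which gives a useful consistency check; but organizing the computation so that this is transparent is where the care lies. In practice, rather than reproving the $\eta$-transformation I would simply cite Yi's thesis~\cite{jy3}, where these are established, and note that they are immediate consequences of the standard modular transformation of $f(-q)$.
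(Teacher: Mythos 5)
The paper offers no proof of this lemma at all: it is quoted verbatim from Yi's thesis \cite{jy3}, so there is nothing internal to compare against. Your sketch is the standard (and correct) route, and the computation does close up cleanly: writing $\eta(\tau)=q^{1/24}f(-q)$ with $q=e^{2\pi i\tau}$, the normalization in \eqref{rkn} is exactly what makes $r_{k,n}=\eta\bigl(i\sqrt{n/k}\bigr)\big/\bigl(k^{1/4}\eta\bigl(i\sqrt{nk}\bigr)\bigr)$, after which the single identity $\eta(i/t)=\sqrt{t}\,\eta(it)$ gives $r_{k,1/n}=k^{1/4}\eta\bigl(i\sqrt{nk}\bigr)/\eta\bigl(i\sqrt{n/k}\bigr)$ (hence part (i)) and $r_{n,k}=r_{k,n}$ (part (ii)); since $\tau$ is purely imaginary no root-of-unity multiplier ever enters, so the phase bookkeeping you worry about is vacuous here. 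One small correction: $e^{-2\pi\sqrt{k/n}}$ is not ``the modulus defining $r_{k,1/n}$'' but its $k$-th power --- the inversion $\tau\mapsto-1/\tau$ sends the point $i\sqrt{n/k}$ to $k\cdot i\sqrt{(1/n)/k}$, i.e.\ it exchanges the numerator $\eta$ of $r_{k,n}$ with the denominator $\eta$ of $r_{k,1/n}$ --- but you flag the need to track this and the argument is unaffected. For the primed quantities one needs $f(q)$ expressed as an eta-quotient (e.g.\ $q^{1/24}f(q)=\eta^3(2\tau)/\bigl(\eta(\tau)\eta(4\tau)\bigr)$), which is more bookkeeping than you indicate but follows the same template. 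In short: your proposal supplies a correct proof outline for a statement the paper merely cites.
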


\begin{lemma}\cite{jy3}\label{S29}
For any positive real number $k$, $m$ and $n$,
\begin{enumerate}
\item[i)] $r_{k,n/m}={r_{mk,n}}/{r_{nk,m}}$,
\item[ii)] $r'_{k,n/m}={r'_{mk,n}}/{r'_{nk,m}}$.
\end{enumerate}
\end{lemma}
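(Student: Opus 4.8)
The plan is to pass from the definitions \eqref{rkn} and \eqref{rkkn} to a quotient of Dedekind eta values in which the $q$--power prefactors disappear, and then to read off both identities as a single telescoping cancellation together with the symmetry already recorded in Lemma \ref{S27}. First I would rewrite $r_{k,n}$. Using the classical relation $f(-q)=q^{-1/24}\eta(\tau)$, where $q=e^{2\pi i\tau}$ and $\eta(\tau)=e^{\pi i\tau/12}\prod_{n=1}^{\infty}(1-e^{2\pi i n\tau})$ is the Dedekind eta function, the choice $q=e^{-2\pi\sqrt{n/k}}$ corresponds to $\tau=i\sqrt{n/k}$, whence $f(-q)=q^{-1/24}\eta(i\sqrt{n/k})$ and $f(-q^{k})=q^{-k/24}\eta(i\sqrt{nk})$. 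A one--line check shows the total power of $q$ contributed by $f(-q)$, by $q^{(k-1)/24}$ and by $f(-q^{k})$ is $-\tfrac1{24}-\tfrac{k-1}{24}+\tfrac{k}{24}=0$, so that
\begin{equation*}
r_{k,n}=\frac{\eta(i\sqrt{n/k})}{k^{1/4}\,\eta(i\sqrt{nk})}.
\end{equation*}
This is the decisive simplification: $r_{k,n}$ is a quotient of two eta values, whose arguments $\sqrt{n/k}$ and $\sqrt{nk}$ have product $n$ and quotient $k$, times the algebraic factor $k^{-1/4}$.

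For part (i) I would write each of the three quantities in this form. One finds that $r_{k,n/m}$ carries the lower argument $\sqrt{n/(mk)}$ and upper argument $\sqrt{nk/m}$, while $r_{mk,n}$ carries $\sqrt{n/(mk)}$ and $\sqrt{mnk}$. The crucial move is to apply the symmetry $r_{k,n}=r_{n,k}$ of Lemma \ref{S27} to the denominator factor, replacing $r_{nk,m}$ by $r_{m,nk}=m^{-1/4}\eta(i\sqrt{nk/m})/\eta(i\sqrt{mnk})$. Forming the product $r_{k,n/m}\,r_{nk,m}=r_{k,n/m}\,r_{m,nk}$, the shared eta value $\eta(i\sqrt{nk/m})$ appears once in a denominator and once in a numerator and telescopes away, leaving exactly $(mk)^{-1/4}\eta(i\sqrt{n/(mk)})/\eta(i\sqrt{mnk})=r_{mk,n}$. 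This is precisely the asserted $r_{k,n/m}=r_{mk,n}/r_{nk,m}$.

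Part (ii) runs along the same three steps, now with $f(q)$ and $q=e^{-\pi\sqrt{n/k}}$. The only structural difference is that $f(q)=f(q,q^2)=f_2 f_3^2/(f_1 f_6)$ is an eta--quotient rather than a single eta: writing $\tau=\tfrac12 i\sqrt{n/k}$ one gets $f(q)=q^{-1/24}\eta(2\tau)\eta(3\tau)^2/\big(\eta(\tau)\eta(6\tau)\big)$, and again the $q$--prefactors in \eqref{rkkn} cancel, so that $r'_{k,n}$ becomes an algebraic multiple of a product of eta quotients in the arguments $\tfrac12\sqrt{n/k}$, $\sqrt{n/k}$, $\tfrac32\sqrt{n/k}$, $3\sqrt{n/k}$. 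The symmetry $r'_{k,n}=r'_{n,k}$ of Lemma \ref{S27} then produces the same telescoping, the whole eta--quotient in the argument $\sqrt{nk/m}$ cancelling at once, and yields (ii).

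I expect the main obstacle to be bookkeeping rather than anything conceptual: one must keep the fractional powers of $k$, $m$, $n$ correctly aligned through the cancellations, and, in the primed case, verify that all four eta factors of $f(q)$ pair up and telescope simultaneously so that no spurious prefactor survives. The underlying analytic input is in both cases only the modular transformation $\eta(-1/\tau)=\sqrt{-i\tau}\,\eta(\tau)$, which is exactly what powers the reciprocity and symmetry of Lemma \ref{S27}; granting Lemma \ref{S27}, the argument above is purely formal.
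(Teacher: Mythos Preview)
The paper does not prove Lemma~\ref{S29}; it merely cites it from Yi's thesis~\cite{jy3} as a preliminary result. So there is nothing to compare against, and your proposal supplies an actual argument where the paper gives none.

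Your strategy is sound and in fact slightly more elaborate than necessary. The key observation you make---that after invoking $r_{nk,m}=r_{m,nk}$ from Lemma~\ref{S27} the product $r_{k,n/m}\,r_{m,nk}$ telescopes to $r_{mk,n}$---already works directly from the definitions \eqref{rkn}, \eqref{rkkn} without passing to $\eta$. Writing $q_1=e^{-2\pi\sqrt{n/(mk)}}$, one has $q_1^{k}=e^{-2\pi\sqrt{nk/m}}=:q_2$ and $q_2^{m}=q_1^{mk}$; the $f$-values then cancel pairwise and the residual $q$-power is $q_1^{(k-1)/24}q_2^{(m-1)/24}=q_1^{(k-1+k(m-1))/24}=q_1^{(mk-1)/24}$, exactly what $r_{mk,n}$ requires. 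The same bookkeeping handles~(ii). So the $\eta$-rewriting, while correct for~(i), is an optional detour.

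One slip to fix in your treatment of~(ii): in the paper's notation $f(q)$ means $f(-q)$ with $q\mapsto -q$, i.e.\ $f(q)=f(q,-q^{2})=(-q;-q)_\infty=\dfrac{f_2^{3}}{f_1 f_4}$, not $f(q,q^{2})=\dfrac{f_2 f_3^{2}}{f_1 f_6}$ as you wrote. Your eta-quotient is the wrong one, though by coincidence it also carries the prefactor $q^{-1/24}$, so the structural telescoping you describe would still go through. Since, as noted above, the proof does not actually require any explicit eta representation of $f(q)$, this error is cosmetic rather than fatal.
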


\begin{lemma}
	\begin{equation}\label{deg7r}
		\sqrt 7 \left(r_{7,n}r_{7,9n}+\frac{1}{r_{7,n}r_{7,9n}}\right)=
		\left(\frac{r_{7,9n}}{r_{7,n}}\right)^2-3+\left(\frac{r_{7,n}}{r_{7,9n}}\right)^2.
	\end{equation}
\end{lemma}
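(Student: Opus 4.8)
The plan is to recognize \eqref{deg7r} as the specialization of the modular equation \eqref{deg7} to the particular nome $q=e^{-2\pi\sqrt{n/7}}$, after matching the functions $r_{7,n}$ and $r_{7,9n}$ with the quantities $P$ and $Q$ occurring in \eqref{deg7}. First I would fix a positive real number $n$ and set $q:=e^{-2\pi\sqrt{n/7}}$, so that $0<q<1$ and \eqref{deg7} is a valid numerical identity at this $q$. For $k=7$ the exponent in \eqref{rkn} is $(k-1)/24=1/4$, hence
\begin{equation*}
r_{7,n}=\frac{f(-q)}{7^{1/4}q^{1/4}f(-q^7)}=\frac{1}{7^{1/4}}\cdot\frac{f_1}{q^{1/4}f_7}=\frac{P}{7^{1/4}}.
\end{equation*}

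Next I would identify the nome attached to $r_{7,9n}$: since $\sqrt{9n/7}=3\sqrt{n/7}$, definition \eqref{rkn} forces the base $e^{-2\pi\sqrt{9n/7}}=q^{3}$, so that
\begin{equation*}
r_{7,9n}=\frac{f(-q^3)}{7^{1/4}q^{3/4}f(-q^{21})}=\frac{1}{7^{1/4}}\cdot\frac{f_3}{q^{3/4}f_{21}}=\frac{Q}{7^{1/4}}.
\end{equation*}
Thus $P=7^{1/4}r_{7,n}$ and $Q=7^{1/4}r_{7,9n}$, whence $PQ=\sqrt7\,r_{7,n}r_{7,9n}$, $7/(PQ)=\sqrt7/(r_{7,n}r_{7,9n})$, $Q/P=r_{7,9n}/r_{7,n}$ and $P/Q=r_{7,n}/r_{7,9n}$; the normalizing constant $7^{1/4}$ cancels in the ratios and combines to $\sqrt7$ in the product. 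Substituting these four expressions into \eqref{deg7} and rearranging the resulting identity gives \eqref{deg7r} directly.

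There is essentially no analytic obstacle here. The only two points requiring a word of care are: (i) \eqref{deg7}, which holds as an identity of $q$-series for $|q|<1$, may legitimately be evaluated at the transcendental point $q=e^{-2\pi\sqrt{n/7}}\in(0,1)$ for every $n>0$; and (ii) the bookkeeping of the power-of-$7$ normalization, which is precisely what produces the coefficient $\sqrt7$ (rather than $7$ or $7^{1/4}$) on the left-hand side of \eqref{deg7r}, and which also shows why the additive constant $-3$ is unchanged by the substitution. One could phrase the same computation through the functional relations of Lemmas \ref{S27} and \ref{S29}, but the direct substitution above is the shortest route, and that is the proof I would write out.
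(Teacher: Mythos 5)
Your proposal is correct and is exactly the paper's argument: the paper proves \eqref{deg7r} by the one-line observation ``Using \eqref{rkn} with $k=7$ in \eqref{deg7}, we arrive at \eqref{deg7r},'' which is precisely your substitution $P=7^{1/4}r_{7,n}$, $Q=7^{1/4}r_{7,9n}$ at the nome $q=e^{-2\pi\sqrt{n/7}}$. Your write-up just makes explicit the bookkeeping (the $7^{1/4}$ normalization producing $\sqrt7$, and $q^3$ being the nome for $r_{7,9n}$) that the paper leaves implicit.
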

\begin{lemma}
	\begin{equation}\label{deg7rp}
		\sqrt 7 \left(r'_{7,n}r'_{7,9n}+\frac{1}{r'_{7,n}r'_{7,9n}}\right)=
		3-\left(\frac{r'_{7,9n}}{r'_{7,n}}\right)^2-\left(\frac{r'_{7,n}}{r'_{7,9n}}\right)^2.
	\end{equation}
	\begin{proof}
		Using \eqref{rkn} with $k=7$ in \eqref{deg7}, we arrive at \eqref{deg7r}. Replacing $q$ by $-q$ in \eqref{deg7}, then using \eqref{rkkn} in the resulting equation, we obtain \eqref{deg7rp}.
	\end{proof}
\end{lemma}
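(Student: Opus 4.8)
The plan is to read both \eqref{deg7r} and \eqref{deg7rp} off Ramanujan's degree-$21$ modular equation \eqref{deg7} by specialising $q$ and rewriting its four building blocks $PQ$, $7/(PQ)$, $(Q/P)^2$ and $(P/Q)^2$ in terms of J.\ Yi's parameters $r_{7,\cdot}$ and $r'_{7,\cdot}$. First I would dispose of \eqref{deg7r}: take $q=e^{-2\pi\sqrt{n/7}}$, so that $q^{3}=e^{-2\pi\sqrt{9n/7}}$ and hence passing to a modulus of degree $3$ amounts to sending the Yi index $n$ to $9n$ at level $k=7$. Since $(k-1)/24=1/4$ for $k=7$, definition \eqref{rkn} gives
\[
P=\frac{f_1}{q^{1/4}f_7}=7^{1/4}r_{7,n},\qquad Q=\frac{f_3}{q^{3/4}f_{21}}=7^{1/4}r_{7,9n},
\]
so that $PQ=\sqrt7\,r_{7,n}r_{7,9n}$, $Q/P=r_{7,9n}/r_{7,n}$ and $P/Q=r_{7,n}/r_{7,9n}$. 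Substituting these into \eqref{deg7} and writing the reciprocal term as $\sqrt7/(\sqrt7\,r_{7,n}r_{7,9n})$ produces \eqref{deg7r} at once.

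For \eqref{deg7rp} I would first replace $q$ by $-q$ in \eqref{deg7}. Every index occurring there is odd, so each $f_n=f(-q^n)$ is sent to $f(q^n)$; moreover, of the four blocks only the genuine Laurent series $PQ=f_1f_3/(qf_7f_{21})$, $7/(PQ)$, $(Q/P)^2=f_3^2f_7^2/(qf_1^2f_{21}^2)$ and $(P/Q)^2=qf_1^2f_{21}^2/(f_3^2f_7^2)$ actually get transformed, and $q\mapsto-q$ multiplies each of them by the sign of its power of $q$. Next I would take $q=e^{-\pi\sqrt{n/7}}$ and invoke \eqref{rkkn} (again $(k-1)/24=1/4$, and again degree $3$ corresponding to $n\mapsto 9n$), which lets one identify
\[
\frac{f(q)f(q^3)}{q\,f(q^7)f(q^{21})}=\sqrt7\,r'_{7,n}r'_{7,9n},\qquad \frac{f(q^3)f(q^7)}{q^{1/2}f(q)f(q^{21})}=\frac{r'_{7,9n}}{r'_{7,n}}.
\]
Feeding these into the transformed \eqref{deg7} and collecting terms then yields \eqref{deg7rp}.

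The step I expect to be the main obstacle is this sign and normalisation bookkeeping rather than anything conceptual. The quotients $P/Q$ and $Q/P$ are not power series — only their squares are — so one must carry the squared blocks throughout; and one must keep precise track of the sign each block acquires under $q\mapsto-q$, and of whether the resulting identity needs to be multiplied through by $-1$, so that the constant term and the two quadratic terms of \eqref{deg7rp} come out with the correct signs. Once those accounts balance, what remains is a purely mechanical rearrangement of \eqref{deg7}, and no ingredient beyond \eqref{deg7}, \eqref{rkn} and \eqref{rkkn} is needed.
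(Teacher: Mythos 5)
Your method is exactly the paper's: substitute Yi's parametrisation into \eqref{deg7} to get \eqref{deg7r}, and replace $q$ by $-q$ in \eqref{deg7} before substituting \eqref{rkkn} to get \eqref{deg7rp}. Your normalisation bookkeeping for the unprimed case is right: since $(7-1)/24=1/4$, one has $P=7^{1/4}r_{7,n}$ and $Q=7^{1/4}r_{7,9n}$, so $PQ=\sqrt7\,r_{7,n}r_{7,9n}$ and \eqref{deg7r} follows at once.

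The gap is precisely the sign computation you defer and flag as the main obstacle: when it is carried out, it does not produce \eqref{deg7rp} as stated. Write $X=PQ=f_1f_3/(qf_7f_{21})$ and $Y=(Q/P)^2=f_3^2f_7^2/(qf_1^2f_{21}^2)$; all indices are odd and each block carries exactly one factor $q^{-1}$, so $q\mapsto-q$ sends $X\mapsto-S$ and $Y\mapsto-T$ with $S=\sqrt7\,r'_{7,n}r'_{7,9n}$ and $T=(r'_{7,9n}/r'_{7,n})^2$. Substituting into $X+7/X=Y-3+1/Y$ gives $-S-7/S=-T-3-1/T$, i.e.
\begin{equation*}
\sqrt7\left(r'_{7,n}r'_{7,9n}+\frac{1}{r'_{7,n}r'_{7,9n}}\right)=3+\left(\frac{r'_{7,9n}}{r'_{7,n}}\right)^2+\left(\frac{r'_{7,n}}{r'_{7,9n}}\right)^2,
\end{equation*}
with plus signs rather than the minus signs of \eqref{deg7rp}. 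The printed version cannot hold for positive $r'$: its left side is at least $2\sqrt7$ by the AM--GM inequality, while its right side is at most $1$. The plus-sign version is also what the paper actually uses downstream: with $n=1/6$ and \eqref{p6} it gives $x^2+1/x^2=10\sqrt7-24$, which is \eqref{p7}, whereas the printed sign would give the impossible $x^2+1/x^2=24-10\sqrt7<2$. So your plan is sound and matches the paper's, but to complete the proof you must finish the sign accounting, and doing so shows the lemma's right-hand side should read $3+(\,\cdot\,)^2+(\,\cdot\,)^2$.
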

\begin{lemma}
	\begin{equation}\begin{split}\label{deg3r}
		&3\sqrt 3 \left((r_{3,n}r_{3,49n})^3+\frac{1}{(r_{3,n}r_{3,49n})^3}\right)=\left(\frac{r_{3,49n}}{r_{3,n}}\right)^4-7\left(\frac{r_{3,49n}}{r_{3,n}}\right)^2\\&+7\left(\frac{r_{3,n}}{r_{3,49n}}\right)^2-\left(\frac{r_{3,n}}{r_{3,49n}}\right)^4.
	\end{split}\end{equation}
\end{lemma}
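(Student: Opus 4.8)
The plan is to mirror exactly the argument already used for \eqref{deg7r}, now specializing the free parameter $k$ to $3$ in Ramanujan's degree-$3$ relation \eqref{deg3}. First I would recall from \eqref{rkn} that, with the choice $q=e^{-2\pi\sqrt{n/3}}$, the exponent $(k-1)/24$ equals $1/12$ when $k=3$, so that
\[
r_{3,n}=\frac{f(-q)}{3^{1/4}q^{1/12}f(-q^3)}=\frac{1}{3^{1/4}}\cdot\frac{f_1}{q^{1/12}f_3}.
\]
Hence the function $P$ appearing in \eqref{deg3} is simply $P=3^{1/4}r_{3,n}$ for this value of the nome.

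Next I would note that the function $Q=f_7/(q^{7/12}f_{21})$ of \eqref{deg3} is obtained from the very same expression by replacing $q$ with $q^7$. Since $q^7=e^{-2\pi\cdot 7\sqrt{n/3}}=e^{-2\pi\sqrt{49n/3}}$ is precisely the value of the nome attached to $r_{3,49n}$, it follows that $Q=3^{1/4}r_{3,49n}$. Consequently $PQ=\sqrt{3}\,r_{3,n}r_{3,49n}$ and $Q/P=r_{3,49n}/r_{3,n}$, the normalizing factors $3^{1/4}$ cancelling in the ratio.

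Substituting these identifications into \eqref{deg3} then completes the proof. The left-hand side becomes $(PQ)^3+27/(PQ)^3=3\sqrt{3}\,(r_{3,n}r_{3,49n})^3+27/\bigl(3\sqrt{3}\,(r_{3,n}r_{3,49n})^3\bigr)$, and because $27/(3\sqrt{3})=3\sqrt{3}$ this collapses to $3\sqrt{3}\bigl((r_{3,n}r_{3,49n})^3+(r_{3,n}r_{3,49n})^{-3}\bigr)$, which is the left side of \eqref{deg3r}; meanwhile the right-hand side of \eqref{deg3} passes verbatim into the right side of \eqref{deg3r} after writing $Q/P=r_{3,49n}/r_{3,n}$. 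There is no genuine obstacle here: the only point requiring care is the bookkeeping of the change of variables — verifying that the exponent shift $q\mapsto q^7$ corresponds exactly to $n\mapsto 49n$ inside $r_{3,\cdot}$ and that the powers of $3$ in the normalization combine to produce the factor $3\sqrt{3}$ on both occurrences. One could alternatively route the degree comparison through Lemma \ref{S29}(i), but the direct substitution just described is the shortest path.
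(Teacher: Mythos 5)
Your proposal is correct and follows exactly the paper's own (one-line) proof: substitute the parameterization \eqref{rkn} with $k=3$ into Ramanujan's degree-$3$ relation \eqref{deg3}, identifying $P=3^{1/4}r_{3,n}$ and $Q=3^{1/4}r_{3,49n}$ so that $(PQ)^3+27/(PQ)^3$ collapses to $3\sqrt{3}\left((r_{3,n}r_{3,49n})^3+(r_{3,n}r_{3,49n})^{-3}\right)$. Your write-up merely makes explicit the bookkeeping ($q\mapsto q^7$ corresponding to $n\mapsto 49n$ and the cancellation of the $3^{1/4}$ factors in $Q/P$) that the paper leaves implicit.
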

\begin{lemma}
	\begin{equation}\label{deg3rr}\begin{split}
	&3\sqrt 3 \left((r'_{3,n}r'_{3,49n})^3+\frac{1}{(r'_{3,n}r'_{3,49n})^3}\right)=\left(\frac{r'_{3,49n}}{r'_{3,n}}\right)^4+7\left(\frac{r'_{3,49n}}{r'_{3,n}}\right)^2\\&-7\left(\frac{r'_{3,n}}{r'_{3,49n}}\right)^2-\left(\frac{r'_{3,n}}{r'_{3,49n}}\right)^4.
	\end{split}\end{equation}
\end{lemma}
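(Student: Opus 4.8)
The plan is to deduce \eqref{deg3rr} from Ramanujan's modular equation \eqref{deg3} by the substitution $q\mapsto-q$, exactly as \eqref{deg7rp} was deduced from \eqref{deg7}. In \eqref{deg3} we have $P=f_1/(q^{1/12}f_3)$ and $Q=f_7/(q^{7/12}f_{21})$, so $PQ=f_1f_7/(q^{2/3}f_3f_{21})$ and $Q/P=f_3f_7/(q^{1/2}f_1f_{21})$. Thus the only quantities that actually appear in \eqref{deg3} are $(PQ)^3$ (a rational function of the $f_j$ times $q^{-2}$), the quadratic ratios $(Q/P)^2$, $(P/Q)^2$ (times $q^{\mp1}$), and the quartic ratios $(Q/P)^4$, $(P/Q)^4$ (times $q^{\mp2}$).

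First I would replace $q$ by $-q$ throughout \eqref{deg3}; this is legitimate because \eqref{deg3} is an identity of functions analytic on $|q|<1$. Under this substitution $f(-q^m)\mapsto f(q^m)$ for $m\in\{1,3,7,21\}$, while $q^{-2}\mapsto q^{-2}$ is unchanged and $q^{-1}\mapsto-q^{-1}$ flips sign. Hence $(PQ)^3$ and the quartic ratios are carried to the analogous expressions in $f(q),f(q^3),f(q^7),f(q^{21})$ with no sign change, whereas each quadratic ratio $(Q/P)^2$, $(P/Q)^2$ picks up a factor $-1$. Writing $\widetilde P=f(q)/(q^{1/12}f(q^3))$ and $\widetilde Q=f(q^7)/(q^{7/12}f(q^{21}))$, equation \eqref{deg3} becomes
\[
(\widetilde P\widetilde Q)^3+\frac{27}{(\widetilde P\widetilde Q)^3}=\left(\frac{\widetilde Q}{\widetilde P}\right)^4+7\left(\frac{\widetilde Q}{\widetilde P}\right)^2-7\left(\frac{\widetilde P}{\widetilde Q}\right)^2-\left(\frac{\widetilde P}{\widetilde Q}\right)^4,
\]
which already exhibits the sign pattern on the right of \eqref{deg3rr}.

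Next I would set $q=e^{-\pi\sqrt{n/3}}$ and invoke the definition \eqref{rkkn} of $r'_{k,n}$ with $k=3$: since $(k-1)/24=1/12$, we get $\widetilde P=3^{1/4}r'_{3,n}$, and since $q^7=e^{-7\pi\sqrt{n/3}}=e^{-\pi\sqrt{49n/3}}$, replacing $q$ by $q^7$ in \eqref{rkkn} gives $\widetilde Q=3^{1/4}r'_{3,49n}$. Substituting these, the left-hand side becomes $\big(3^{1/2}r'_{3,n}r'_{3,49n}\big)^3+27\big(3^{1/2}r'_{3,n}r'_{3,49n}\big)^{-3}=3\sqrt3\big((r'_{3,n}r'_{3,49n})^3+(r'_{3,n}r'_{3,49n})^{-3}\big)$, because $27/3^{3/2}=3\sqrt3$, while $\widetilde Q/\widetilde P$ and $\widetilde P/\widetilde Q$ become $r'_{3,49n}/r'_{3,n}$ and $r'_{3,n}/r'_{3,49n}$; this is precisely \eqref{deg3rr}. (The companion identity \eqref{deg3r} follows in the same way, omitting the $q\mapsto-q$ step and using \eqref{rkn} instead of \eqref{rkkn}.)

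I do not anticipate a serious obstacle; the only point demanding care is the sign bookkeeping under $q\mapsto-q$ --- confirming that the net exponent of $q$ in $(PQ)^3$, $(Q/P)^4$, $(P/Q)^4$ is even while that in $(Q/P)^2$, $(P/Q)^2$ is odd, so that exactly the two quadratic terms on the right of \eqref{deg3} change sign and the two quartic terms do not. Apart from that, the proof is just a change of variable followed by the reindexing $q^7=e^{-\pi\sqrt{49n/3}}$ that is built into \eqref{rkkn}.
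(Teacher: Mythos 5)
Your proposal is correct and follows exactly the route the paper takes: replace $q$ by $-q$ in \eqref{deg3} (the paper's one-line proof, which contains a typo citing \eqref{deg7rp} instead of \eqref{deg3rr}) and then apply the definition \eqref{rkkn} with $k=3$. Your sign bookkeeping --- integer powers $q^{-2}$ in $(PQ)^3$ and the quartic ratios versus $q^{-1}$ in the quadratic ratios --- and the normalization $27/3^{3/2}=3\sqrt3$ are exactly the details the paper leaves implicit, and both check out.
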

\begin{proof}
	Using \eqref{rkn} with $k=3$ in \eqref{deg3}, we arrive at \eqref{deg3r}. Replacing $q$ by $-q$ in \eqref{deg3}, then using \eqref{rkkn} in the resulting equation, we obtain \eqref{deg7rp}.
\end{proof}
\begin{theorem}
	\begin{eqnarray}
		r_{7,6}&=&2^{-1/2}\left\{(\sqrt 7+\sqrt 3)(\sqrt 3+\sqrt 2)\right\}^{1/2}, \label{S1}\\
		r_{7,3/2}&=&2^{-1/2}\left\{(\sqrt 7-\sqrt 3)(\sqrt 3+\sqrt 2)\right\}^{1/2}, \label{S2}\\
		r'_{7,6}&=& 2^{-1/2}(5+\sqrt{21})^{1/2}(2-\sqrt{3})^{1/2}(\sqrt 7-\sqrt 6)^{1/4}(5+2\sqrt 6)^{1/4},\label{S3}\\
		r'_{7,3/2}&=& 2^{-1/2}(5-\sqrt{21})^{1/2}(2+\sqrt{3})^{1/2}(\sqrt 7-\sqrt 6)^{1/4}(5+2\sqrt 6)^{1/4}.\label{S4}
	\end{eqnarray}
\end{theorem}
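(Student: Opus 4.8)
The plan is to pin down $r_{7,6}$ and $r_{7,3/2}$ by producing two independent algebraic relations between them, and then to transfer the argument verbatim to the primed case. The key observation is that $6$ and $3/2$ differ by a factor $4$ (a ``degree $2$'' step $q\mapsto q^{2}$) and, after one reciprocation via Lemma \ref{S27}(i), by a factor $9$ (a ``degree $3$'' step); hence the new identity \eqref{w2} and the classical identity \eqref{deg7r} are precisely the two tools needed, and \eqref{S3}, \eqref{S4} will come from the same scheme run with the $q\mapsto -q$ companions \eqref{deg7rp} and the $q\mapsto -q$ analogue of \eqref{w2}.

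\textbf{First relation.} I would specialise $q=e^{-2\pi/\sqrt{42}}$, i.e.\ take $n=1/6$ in $q=e^{-2\pi\sqrt{n/7}}$, in \eqref{w2}; with this choice the quantities there are evaluated at $q$ and at $q^{2}$, and after unwinding the definitions ($u_{1},u_{2}$ being $u(q),u(q^{2})$ with $u$ as in \eqref{uv}) together with $r_{7,1/6}\,r_{7,6}=1$ and $r_{7,2/3}\,r_{7,3/2}=1$ (Lemma \ref{S27}(i)) one gets $\{u_{1},u_{2}\}=\bigl\{\rho/\sqrt7,\ (\sqrt7\,\rho)^{-1}\bigr\}$ with $\rho:=r_{7,6}/r_{7,3/2}$. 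Substituting into \eqref{w2} (which is symmetric in $u_{1},u_{2}$) and clearing denominators, the identity collapses, with $\sigma:=\rho+\rho^{-1}$, to
\[
\sigma^{3}-5\sigma-2\sqrt7=0,\qquad\text{i.e.}\qquad (\sigma-\sqrt7)\bigl(\sigma^{2}+\sqrt7\,\sigma+2\bigr)=0 .
\]
The quadratic factor has negative discriminant, so $\sigma=\sqrt7$; since $r_{7,n}$ is increasing in $n$ \cite{jy3} and $r_{7,1}=1$ (Lemma \ref{S27}(i) at $n=1$) we have $\rho>1$, whence $\rho=\tfrac12(\sqrt7+\sqrt3)$.

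\textbf{Second relation.} I would then put $n=2/3$ in \eqref{deg7r}. Using $r_{7,2/3}=1/r_{7,3/2}$ gives $r_{7,2/3}\,r_{7,6}=\rho$ and $r_{7,6}/r_{7,2/3}=r_{7,6}\,r_{7,3/2}=:\pi$, so \eqref{deg7r} becomes $\sqrt7\,(\rho+\rho^{-1})=\pi^{2}-3+\pi^{-2}$; the left side equals $7$ by the first step, so $\pi^{2}+\pi^{-2}=10$, and since $\pi>1$ this forces $\pi^{2}=5+2\sqrt6$, i.e.\ $\pi=\sqrt3+\sqrt2$. Multiplying and dividing then yield
\[
r_{7,6}^{2}=\rho\pi=\tfrac12(\sqrt7+\sqrt3)(\sqrt3+\sqrt2),\qquad r_{7,3/2}^{2}=\pi/\rho=\tfrac12(\sqrt7-\sqrt3)(\sqrt3+\sqrt2),
\]
and extracting positive square roots gives \eqref{S1} and \eqref{S2}.

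The primed evaluations \eqref{S3}, \eqref{S4} follow by the identical procedure, with \eqref{deg7rp} in place of \eqref{deg7r} and the $q\mapsto -q$ companion of \eqref{w2} in place of \eqref{w2}: one first reads off $r'_{7,6}/r'_{7,3/2}$ from the resulting cubic in $\sigma$, then $r'_{7,6}\,r'_{7,3/2}$ from the analogue of the second relation, and reassembles the stated radicals. I expect the difficulty to be organisational rather than conceptual: (i) recognising $n=1/6$ as the one specialisation for which all of $n,9n,4n,36n$ are reciprocals of $6$ or $3/2$, so that \eqref{w2} genuinely closes up in the two target quantities; and (ii) in the primed case, correctly tracking the fourth roots of $-1$ generated when $q\mapsto -q$ acts on the factors $q^{1/4},q^{3/4}$ (and $q^{1/6},q^{1/3}$) in the definitions, and choosing the correct branch at each square-root extraction — which in every instance is settled by the monotonicity of $r_{k,n},r'_{k,n}$ and the normalisations $r_{k,1}=r'_{k,1}=1$.
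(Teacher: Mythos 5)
Your proposal is correct and follows essentially the same route as the paper: both first specialise \eqref{w2} at $n=1/6$ to get $r_{7,6}r_{7,2/3}=(\sqrt7+\sqrt3)/2$ (your $\sigma^3-5\sigma-2\sqrt7=0$ is exactly the paper's sextic $(r^4+\sqrt7r^3+4r^2+\sqrt7r+1)(r^2-\sqrt7r+1)=0$ rewritten in $\sigma=r+r^{-1}$), then feed this into \eqref{deg7r} to get $r_{7,6}r_{7,3/2}=\sqrt3+\sqrt2$, and run the $q\mapsto-q$ analogues for the primed values. The one small imprecision is in the primed step: the $q\mapsto-q$ companion of \eqref{w2} does not close up as a cubic in the primed ratio alone but as a mixed cubic relation between the unprimed product $R=r_{7,6}r_{7,2/3}$ and the primed product $r=r'_{7,6}r'_{7,2/3}$ (the paper's \eqref{p5}), into which one substitutes the already-known value of $R$ — which is exactly the ``reassembly'' your sketch anticipates.
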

\begin{proof}
	Employing \eqref{rkn} in \eqref{w2} with $n=1/6$ and recalling the fact that $r_{7,n}r_{7,1/n}=1$, we obtain
	\begin{equation}\label{p1}
		(r^4+\sqrt 7r^3+4r^2+\sqrt 7r+1)(r^2-\sqrt 7r+1)=0, \,\,\text{where}\,\,r=r_{7,6}r_{7,2/3}. 
	\end{equation}
	Since $r>1$, hence solving $r^2-\sqrt 7r+1=0,$ we get
	\begin{equation}\label{p2}
		r_{7,6}r_{7,2/3}=\frac{\sqrt 7+\sqrt 3}{2}. 
	\end{equation}
	Now set $n=1/6$ in \eqref{deg7r} and using \eqref{p2}, we deduce that
	\begin{equation}\label{p3}
		(x^2-2x\sqrt 3+1)(x^2+2x\sqrt 3+1)=0,  \,\,\text{where}\,\,x=r_{7,6}r_{7,3/2}. 
	\end{equation}
	Since $x>1$, hence solving $x^2-2x\sqrt 3+1=0,$ we find that
	\begin{equation}\label{p4}
		r_{7,6}r_{7,3/2}=\sqrt 3+\sqrt 2. 
	\end{equation}
	Using \eqref{p2} and \eqref{p4}, we obtain \eqref{S1} and \eqref{S2}. Now let us proceed to prove $r'_{7,6}$ and $r'_{7,3/2}.$ Replacing $q$ to $-q$ in \eqref{w2}, then employing \eqref{rkn} and \eqref{rkkn} in the resulting equation with $n=1/6$, we obtain 
	\begin{equation}\label{p5}
		R^3+2rR^2-\sqrt 7r^2R^2+\sqrt 7rR-2r^2R-r^3=0,
	\end{equation}  
	where $R:=r_{7,6}r_{7,2/3}$ and $r:=r'_{7,6}r'_{7,2/3}.$\\
	Using \eqref{p2} in \eqref{p5} and factoring we obtain
	\begin{equation}
		(2r+\sqrt 7-\sqrt 3)(10+2r+2\sqrt{21}+3\sqrt 7+5\sqrt 3)(10-2r+2\sqrt{21}-3\sqrt 7-5\sqrt 3)=0.
	\end{equation}
	Since $r>0$, we find that
	\begin{equation}\label{p6}
		r'_{7,6}r'_{7,2/3}=\frac{(5+\sqrt{21})(2-\sqrt{3})}{2}.
	\end{equation}
	Now set $n=1/6$ in \eqref{deg7rp} and using \eqref{p6}, we deduce that
	\begin{equation}\label{p7}
		x^4-(10\sqrt 7-24)x^2+1=0,\,\,\text{where}\,\,x=r'_{7,6}r'_{7,3/2}. 
	\end{equation}
	Since $x>1$, hence solving for $x,$ we obtain
	\begin{equation}\label{p8}
		r'_{7,6}r'_{7,3/2}=\sqrt{(\sqrt 7-\sqrt 6)(5+2\sqrt 6)}. 
	\end{equation}
	Using \eqref{p6} and \eqref{p8}, we obtain \eqref{S3} and \eqref{S4}, which completes the proof.
\end{proof}
\begin{theorem}
	\begin{eqnarray}
		&&r_{3,14}= (\sqrt 3+\sqrt 2)^{1/2}(2\sqrt 2+\sqrt 7)^{1/6}, \label{S5}\\
		&&r_{3,7/2}=(\sqrt 3+\sqrt 2)^{1/2}(2\sqrt 2-\sqrt 7)^{1/6}, \label{S6}\\
		&&r_{2,21}=2^{-1/2}(\sqrt 7+\sqrt 3)^{1/2}(2\sqrt 2+\sqrt 7)^{1/6}, \label{S7}\\
		&&r_{2,7/3}=2^{-1/2}(\sqrt 7+\sqrt 3)^{1/2}(2\sqrt 2-\sqrt 7)^{1/6}. \label{S8}
	\end{eqnarray}
\end{theorem}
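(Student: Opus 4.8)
The plan is to derive all four quotients from the data already in hand from the proof of \eqref{S1}--\eqref{S2}: the values $r_{7,6}$, $r_{7,3/2}$ and the intermediate identity $r_{7,6}\,r_{7,2/3}=\frac{\sqrt7+\sqrt3}{2}$. Fix the nome $q=e^{-2\pi\sqrt{1/42}}$ (the point $q=e^{-2\pi\sqrt{2/21}}$ serves equally well). Rewriting \eqref{rkn} as $f_1/f_k=k^{1/4}q^{(k-1)/24}r_{k,n}$ and tracking the nome shifts $q\mapsto q^{2},q^{3},q^{7},\dots$, one checks that at this $q$ the modular functions of \eqref{uv} collapse to $r=f_1/(q^{1/12}f_3)=3^{1/4}/r_{3,14}$, $s=f_7/(q^{7/12}f_{21})=3^{1/4}r_{3,7/2}$, $w=\sqrt q\,f_1f_{21}/(f_3f_7)=1/(r_{3,14}\,r_{3,7/2})$ and $u=qf_7f_{21}/(f_1f_3)=1/\bigl(\sqrt7\,r_{7,1/6}r_{7,3/2}\bigr)=\frac{7+\sqrt{21}}{14}$, the last step using $r_{7,1/6}r_{7,6}=r_{7,2/3}r_{7,3/2}=1$ (Lemma \ref{S27}) and the known $r_{7,6}r_{7,2/3}$; likewise in \eqref{w2} one gets $u_1=\frac{1}{\sqrt7}\,r_{7,6}r_{7,2/3}$ and $u_2=\frac{1}{\sqrt7}(r_{7,6}r_{7,2/3})^{-1}$.

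For \eqref{S5}--\eqref{S6} I would use two scalar relations. First, \eqref{R1} at this nome reads $w^{2}+w^{-2}=7u+u^{-1}+3=10$; since $w=\sqrt q\,(1+O(q))\to0$ as $q\to0$, this forces $w=\sqrt3-\sqrt2$, i.e.\ $r_{3,14}\,r_{3,7/2}=\sqrt3+\sqrt2$. Second, \eqref{deg3r} with $n=2/7$, for which $r_{3,n}r_{3,49n}=r_{3,14}/r_{3,7/2}$ and $r_{3,49n}/r_{3,n}=r_{3,14}r_{3,7/2}$: writing $X:=r_{3,14}r_{3,7/2}=\sqrt3+\sqrt2$ and $Z:=r_{3,14}/r_{3,7/2}$, the right side reduces to $(X^{4}-X^{-4})-7(X^{2}-X^{-2})=12\sqrt6$, so $Z^{3}+Z^{-3}=4\sqrt2$ and $Z^{3}=2\sqrt2+\sqrt7$ (the root exceeding $1$). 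Then $r_{3,14}^{2}=XZ$ and $r_{3,7/2}^{2}=X/Z$ give \eqref{S5} and \eqref{S6}. (Feeding $u$ straight into \eqref{R2} and \eqref{R3} instead gives $r_{3,14}^{6}+r_{3,14}^{-6}=36\sqrt6+22\sqrt{14}$ and $r_{3,7/2}^{6}+r_{3,7/2}^{-6}=36\sqrt6-22\sqrt{14}$; this is shorter but now a sextic must be solved and its root identified with $(\sqrt3+\sqrt2)^{3}(2\sqrt2\pm\sqrt7)$.)

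For \eqref{S7}--\eqref{S8} I would first recover the product. With $u_1,u_2$ as above, \eqref{w2} becomes exactly the polynomial encountered in proving \eqref{S1}, namely $\rho^{6}-2\rho^{4}-2\sqrt7\,\rho^{3}-2\rho^{2}+1=0$ with $\rho=r_{7,6}r_{7,2/3}$, whose admissible root is $\frac{\sqrt7+\sqrt3}{2}$; on the other hand the $k=2$ re-parametrisation of $u_1/u_2$ at the same nome equals $(r_{2,21}r_{2,7/3})^{2}$, so $r_{2,21}r_{2,7/3}=r_{7,6}r_{7,2/3}=\frac{\sqrt7+\sqrt3}{2}$. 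To separate the factors I would invoke Lemma \ref{S29}(i) together with the symmetry Lemma \ref{S27}(ii): taking $(k,n,m)=(7,2,3)$ gives $r_{7,2/3}=r_{21,2}/r_{14,3}=r_{2,21}/r_{3,14}$, and taking $(k,n,m)=(2,7,3)$ gives $r_{2,7/3}=r_{6,7}/r_{14,3}=r_{7,6}/r_{3,14}$. Hence $r_{2,21}=r_{3,14}/r_{7,3/2}$ and $r_{2,7/3}=r_{7,6}/r_{3,14}$, and inserting \eqref{S1}, \eqref{S2} and \eqref{S5} and simplifying the surds yields \eqref{S7} and \eqref{S8}.

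The obstacle is clerical, not conceptual. At each step one must (a) express the given theta-quotient through the correct $r_{k,n}$ under the nome shifts $q\mapsto q^{j}$; (b) after specialising each modular equation, factor a polynomial of degree at least six, discard the spurious factor by the usual test along $q_{n}=1/(1+n)$, and pin down the real branch of the surviving factor from the positivity and monotonicity of $r_{k,n}$ in $n$ and the $q\to0$ behaviour of $r,s,w$; and (c) recognise the resulting algebraic unit in the prescribed nested-radical form. Step (c)---for instance verifying that $36\sqrt6+22\sqrt{14}$ is the trace of $(\sqrt3+\sqrt2)^{3}(2\sqrt2+\sqrt7)$---is the only place that demands real care; the rest is routine.
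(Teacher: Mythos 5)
Your proposal is correct, and its second half coincides with the paper's own argument: the quotient $r_{3,14}/r_{3,7/2}$ is extracted from \eqref{deg3r} exactly as in the paper (the paper uses $n=1/14$, you use $n=2/7$; these are the same specialization), and the passage to $r_{2,21}$, $r_{2,7/3}$ via Lemma \ref{S29} combined with the symmetry $r_{k,n}=r_{n,k}$ is also what the paper does --- indeed your explicit factorizations $r_{2,21}=r_{3,14}/r_{7,3/2}$ and $r_{2,7/3}=r_{7,6}/r_{3,14}$ are correct, whereas the paper's displayed version $r_{2,7/3}=r_{7,6}\,r_{3,14}$ contains a typographical slip (it should be a quotient, as monotonicity of $r_{2,n}$ in $n$ already shows). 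Where you genuinely diverge is the first step, the evaluation of the product $r_{3,14}r_{3,7/2}=\sqrt3+\sqrt2$: the paper specializes its \emph{new} modular equation \eqref{u2} at $n=1/14$, where $w_1=w_2=1/(r_{3,14}r_{3,7/2})$, so $Q=1$ and \eqref{u2} collapses to $r^4-10r^2+1=0$; you instead bypass \eqref{u2} entirely and feed the value $u=(7+\sqrt{21})/14$ (inherited from $r_{7,6}r_{7,2/3}=(\sqrt7+\sqrt3)/2$ in the proof of the preceding theorem) into Ramanujan's classical identity \eqref{R1}. Both are valid and give the same quartic; the paper's route is self-contained and exhibits \eqref{u2} as the engine of the evaluation (which is the point of the section), while yours imports data from the previous proof but needs only classical ingredients. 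Your redundant detour recovering $r_{2,21}r_{2,7/3}$ from \eqref{w2} can be deleted, and the branch selection ``$w\to0$ as $q\to0$'' should be sharpened to the statement that $0<w<1$ at the actual nome $q=e^{-2\pi/\sqrt{42}}$ (which follows termwise from the product representation, or from the standard fact $r_{k,n}>1$ for $n>1$); with those cosmetic repairs the argument is complete.
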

\begin{proof}
	Employing \eqref{rkn} in \eqref{u2} with $n=1/14$ and recalling the fact that $r_{3,n}r_{3,1/n}=1$, we obtain
	\begin{equation}\label{p9}
		r^4-10r^2+1=0,\,\,\text{where}\,\,r=r_{3,14}r_{3,7/2}.
	\end{equation}
	Since $r>1$, hence solving \eqref{p9}, we get
	\begin{equation}\label{p10}
		r_{3,14}r_{3,7/2}=\sqrt{3}+\sqrt{2}.
	\end{equation}
	Now set $n=1/14$ in \eqref{deg3r} and using \eqref{p10}, we deduce that
	\begin{equation}\label{p11}
		x^6-4\sqrt 2x^3+1=0,  \,\,\text{where}\,\,x:=r_{7,14}r_{7,2/7}. 
	\end{equation}
	On solving the above equation for $x$ and since $x>0$, we get
	\begin{equation}\label{p12}
		r_{3,14}r_{3,2/7}=(\sqrt{7}+2\sqrt{2})^{1/3}.
	\end{equation}
	By \eqref{p10} and \eqref{p12}, we arrive at \eqref{S5} and \eqref{S6}. Now we proceed to prove \eqref{S7} and \eqref{S8}. 
	
	From Lemma \eqref{S29}, we have
	\begin{equation}\label{p23}
	r_{2,21}=r_{7,6}r_{3,2/7}\,\,\,\text{and}\,\,\,r_{2,7/3}=r_{7,6}r_{3,14}.
	\end{equation}
	Using \eqref{S1}, \eqref{S5}, and \eqref{S6}, we arrive at \eqref{S7} and \eqref{S8} respectively. Hence the proof.
\end{proof}
\begin{theorem}
	\begin{eqnarray}
		&&r_{3,21}=20^{-1/2}\left\{\sqrt{358+78\sqrt{21}}+\sqrt{342+78\sqrt{21}}\right\}^{1/4}\times a, \label{S9}\\
		&&r_{3,3/7}=20^{-1/2}\left\{\sqrt{358+78\sqrt{21}}-\sqrt{342+78\sqrt{21}}\right\}^{1/4}\times a, \label{S10}\\
		&&r'_{3,21}=2^{-11/24}3^{1/8}(\sqrt{7}+\sqrt{3})^{1/12}\left\{\sqrt{11+\sqrt{21}}+\sqrt{3+\sqrt{21}}\right\}^{1/4}, \label{S11}\\
		&&r'_{3,3/7}=2^{-11/24}3^{1/8}(\sqrt{7}+\sqrt{3})^{1/12}\left\{\sqrt{11+\sqrt{21}}-\sqrt{3+\sqrt{21}}\right\}^{1/4}, \label{S12}\\
		&&r_{7,9}=\frac{\sqrt{34+6\sqrt{21}}+\sqrt{18+6\sqrt{21}}}{4},\label{s79}\\
		&&r'_{7,9}=2^{-3/8}\left\{\sqrt{11+\sqrt{21}}+\sqrt{3+\sqrt{21}}\right\}^{1/4},\label{s79d}
	\end{eqnarray}
	where $a=(100\sqrt{7}+200\sqrt{3})^{1/6}(342+78\sqrt{21})^{1/12}$.
\end{theorem}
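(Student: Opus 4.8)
The plan is to derive all six evaluations from the four transformed modular relations \eqref{deg7r}, \eqref{deg7rp}, \eqref{deg3r}, \eqref{deg3rr}, each specialised at a suitable single value of the underlying variable, together with the reciprocity and index--juggling identities of Lemmas \ref{S27} and \ref{S29}. It is convenient to dispose of $r_{7,9}$ and $r'_{7,9}$ first, since these serve as the ``seed'' needed for the remaining four values.

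\emph{The seeds $r_{7,9}$ and $r'_{7,9}$.} By Lemma \ref{S27}(i) with $n=1$ one has $r_{7,1}=r'_{7,1}=1$. Putting $n=1$ in \eqref{deg7r} and using $r_{7,1}=1$ collapses the identity to a relation in the single quantity $t:=r_{7,9}+1/r_{7,9}$, namely $t^{2}-\sqrt7\,t-5=0$, whence $t=\tfrac12(\sqrt7+3\sqrt3)=\tfrac12\sqrt{34+6\sqrt{21}}$; solving $x^{2}-tx+1=0$ for $x=r_{7,9}$ and keeping the root $>1$ (resolved as the other root ambiguities in this section) gives \eqref{s79}. The same procedure applied to \eqref{deg7rp} with $n=1$ and $r'_{7,1}=1$ reduces first to a quadratic in $r'_{7,9}+1/r'_{7,9}$ and then to a quadratic in $r'_{7,9}$ itself, yielding \eqref{s79d}.

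\emph{The values $r_{3,21}$, $r_{3,3/7}$ and their primed analogues.} The key observation is that Lemma \ref{S29}(i) together with Lemma \ref{S27}(ii) gives $r_{3,3/7}=r_{21,3}/r_{9,7}=r_{3,21}/r_{7,9}$. Now specialise \eqref{deg3r} at $n=3/7$: here $r_{3,n}=r_{3,3/7}$ (underlying variable $e^{-2\pi/\sqrt7}$) and $r_{3,49n}=r_{3,21}$ (at the seventh power $e^{-2\pi\sqrt7}$, which is precisely its defining value), so after substituting $r_{3,3/7}=r_{3,21}/r_{7,9}$ every quantity in \eqref{deg3r} is expressed through the now--known number $r_{7,9}$ and the single unknown $r_{3,21}$. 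The left side becomes $3\sqrt3\,(Y+Y^{-1})$ with $Y:=r_{3,21}^{6}r_{7,9}^{-3}$, while the right side, using $r_{7,9}^{2}+r_{7,9}^{-2}=\tfrac12(13+3\sqrt{21})$ and $r_{7,9}^{2}-r_{7,9}^{-2}=\tfrac12\sqrt{342+78\sqrt{21}}$, reduces to the fixed number $\tfrac14(3\sqrt{21}-1)\sqrt{342+78\sqrt{21}}$. Solving the resulting quadratic in $Y$, taking the root that forces $r_{3,21}>1$, gives $r_{3,21}^{6}=Y\,r_{7,9}^{3}$, hence $r_{3,21}=Y^{1/6}r_{7,9}^{1/2}$ and $r_{3,3/7}=Y^{1/6}r_{7,9}^{-1/2}$; denesting these delivers \eqref{S9} and \eqref{S10}. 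The primed evaluations \eqref{S11}, \eqref{S12} are produced in exactly the same way from \eqref{deg3rr} at $n=3/7$, using $r'_{3,3/7}=r'_{3,21}/r'_{7,9}$ (Lemmas \ref{S29}(ii) and \ref{S27}(ii)) and the value of $r'_{7,9}$ just obtained.

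\emph{Main obstacle.} The reductions to one--variable polynomial equations and the various root selections are routine; the latter are settled, as elsewhere in this section, by the positivity of $r_{k,n},r'_{k,n}$ and the reciprocity $r_{k,1/n}r_{k,n}=1$. The genuine difficulty is the closing radical simplification: one must show that $Y^{1/6}r_{7,9}^{\pm1/2}$---a sixth root of a product whose factors lie in the tower $\mathbb{Q}\subset\mathbb{Q}(\sqrt{21})\subset\mathbb{Q}(\sqrt3,\sqrt7)\subset\mathbb{Q}\bigl(\sqrt3,\sqrt7,\sqrt{342+78\sqrt{21}}\,\bigr)$---denests into $20^{-1/2}\bigl\{\sqrt{358+78\sqrt{21}}\pm\sqrt{342+78\sqrt{21}}\,\bigr\}^{1/4}a$ with $a=(100\sqrt7+200\sqrt3)^{1/6}(342+78\sqrt{21})^{1/12}$, and similarly for the primed quantities. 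I would organise this by first passing to squares, $r_{3,21}^{2}=Y^{1/3}r_{7,9}$ and $r_{3,3/7}^{2}=Y^{1/3}r_{7,9}^{-1}$, recording $r_{7,9}^{\pm2}=\tfrac14\bigl(\sqrt{358+78\sqrt{21}}\pm\sqrt{342+78\sqrt{21}}\,\bigr)$ (the minus case after rationalising the denominator), and only then extracting the outstanding square root.
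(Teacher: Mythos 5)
Your route is genuinely different from the paper's and, for the unprimed evaluations, it is correct. The paper obtains the first of its two needed relations, the value of $r_{3,21}r_{3,7/3}$, by specialising its new mixed modular equation \eqref{u3} at $n=1/21$ (see \eqref{p13}--\eqref{p15}), and only afterwards reads off $r_{7,9}=r_{3,21}r_{3,7/3}$ via \eqref{p18}; you invert this, extracting $r_{7,9}$ directly from the classical relation \eqref{deg7r} at $n=1$ (where $r_{7,1}=1$) and then feeding it into \eqref{deg3r}. Since $r_{3,3/7}=r_{3,21}/r_{7,9}$, your specialisation of \eqref{deg3r} at $n=3/7$ is, after the reciprocity $r_{3,n}r_{3,1/n}=1$, the same equation as the paper's at $n=1/21$ (your $Y$ is the cube of the paper's $x$ in \eqref{p16}), and your seed equation $t^{2}-\sqrt7\,t-5=0$, $t=\tfrac12(\sqrt7+3\sqrt3)$, does reproduce \eqref{s79} and the values of $r_{7,9}^{\pm2}$ needed for the final denesting. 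What your version buys is economy: \eqref{S9}, \eqref{S10} and \eqref{s79} follow from \eqref{deg7r} and \eqref{deg3r} alone, without invoking the new Theorem \eqref{u3} at all. What it loses is the point of the section, which is to exhibit \eqref{u3} as the engine of these evaluations.

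The primed seed, however, does not go through as you describe. Setting $n=1$ in \eqref{deg7rp} as printed gives $\sqrt7\,t=3-(t^{2}-2)$ with $t=r'_{7,9}+1/r'_{7,9}$, i.e. $t=\tfrac12(3\sqrt3-\sqrt7)<2$, which is impossible for a positive real $r'_{7,9}$; indeed the printed \eqref{deg7rp} has left side at least $2\sqrt7$ and right side at most $1$, so the signs of the quadratic terms must be $+$, as the paper's own use of it in deriving \eqref{p7} confirms. With the corrected sign one gets $t^{2}-\sqrt7\,t+1=0$, hence $t=\tfrac12(\sqrt7+\sqrt3)$ and $r'_{7,9}=2^{-3/4}\bigl\{\sqrt{11+\sqrt{21}}+\sqrt{3+\sqrt{21}}\bigr\}^{1/2}\approx 1.5392$. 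This is exactly what the paper's own chain \eqref{p30}, \eqref{p21} yields, and it is the value you must carry forward to make \eqref{S11} and \eqref{S12} come out via \eqref{deg3rr} at $n=3/7$ (they do, as one checks), but it is the square of the displayed \eqref{s79d}. So your claim that the $n=1$ specialisation ``yields \eqref{s79d}'' is not literally true: you need to repair the sign in \eqref{deg7rp}, and then either flag the discrepancy with the printed \eqref{s79d} or use $2^{-3/4}\{\cdots\}^{1/2}$ throughout. Once that is done, the rest of your plan closes up.
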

\begin{proof}
	Employing \eqref{rkn} in \eqref{u3} with $n=1/21$ and recalling the fact that $r_{3,n}r_{3,1/n}=1$, we obtain
	\begin{equation}\label{p13}
		h^2-13h-5=0,\,\,\text{where}\,\,h=r^2+\frac{1}{r^2},\,\,\,r=r_{3,21}r_{3,7/3}.
	\end{equation}
	Since $r>1$, hence solving \eqref{p13}, we get
	\begin{equation}\label{p14}
		r^2+\frac{1}{r^2}=\frac{13+3\sqrt{21}}{2}.
	\end{equation}
	On solving the above quadratic equation for $r$, we get \begin{equation}\label{p15}
		r_{3,21}r_{3,7/3}=\frac{\left(\sqrt{358+78\sqrt{21}}+\sqrt{342+78\sqrt{21}}\right)^{1/2}}{2}.
	\end{equation}
	Now setting $n=1/21$ in \eqref{deg3r} and using \eqref{p15}, we deduce that
	\begin{equation}\label{p16}
		(180x^3-(27\sqrt{7}-41\sqrt{3})(342+78\sqrt{21})^{1/2})	(10x^3-(\sqrt{7}+2\sqrt{3})(342+78\sqrt{21})^{1/2})=0,   
	\end{equation}
where\,\,$x:=r_{3,21}r_{3,3/7}$.

	On solving the above equation for $x$ and since $x>1$, we get
	\begin{equation}\label{p17}
		r_{3,21}r_{3,3/7}=\frac{(100\sqrt{7}+200\sqrt{3})^{1/3}(342+78\sqrt{21})^{1/6}}{10}.
	\end{equation}
	By \eqref{p15} and \eqref{p17}, we arrive at \eqref{S9} and \eqref{S10}. 
	To prove \eqref{s79}, set $k=3$, $n=7$, and $m=3$, in Lemma \ref{S29}, we get 
	\begin{equation}\label{p18}
		r_{7,9}=r_{3,21}r_{3,7/3}.
	\end{equation}
	Using the fact that $r_{3,n}r_{3,1/n}=1$, along with equations \eqref{S9} and \eqref{S10}, we arrive at \eqref{s79}. 
	
	Now we shall proceed to prove \eqref{S11} and \eqref{S12}. Change $q$ to $-q$ in \eqref{u3} then employing \eqref{rkkn} in the resulting equation with $n=1/21$, we get
	\begin{eqnarray}\label{p19}
	r^8-r^6-3r^4-r^2+1=0,\,\,\,\text{where}\,\,r:=r'_{3,21}r'_{3,7/3}.
	\end{eqnarray}
	Since $r>1$, we have on solving the above equation, we arrive at 
	\begin{eqnarray}\label{p20}
	r^2+\frac{1}{r^2}=\frac{\sqrt{21}+1}{2}.
	\end{eqnarray} 
	On solving the above equation \eqref{p20} and noting that $r>1$, we find that 
	\begin{equation}\label{p21}
	r'_{3,21}r'_{3,7/3}=2^{-3/4}\left\{\sqrt{11+\sqrt{21}}+\sqrt{3+\sqrt{21}}\right\}^{1/2}.	
	\end{equation}
	Now setting $n=1/21$ in \eqref{deg3rr} and using \eqref{p21}, we deduce that
	\begin{equation}\label{p22}
	\left(12h^3 -(\sqrt{7}-\sqrt{3})\left(\sqrt{6+2\sqrt{21}}\right)\right)\left(2h^3-\sqrt{54+18\sqrt{21}}\right)=0,	
	\end{equation}
	where \,\,$h:=r'_{3,21}r'_{3,3/7}$. Since $h>1$, we have
	\begin{equation}\label{p29}
	r'_{3,21}r'_{3,3/7}=2^{-1/3}\left(18+6\sqrt{21}\right)^{1/6}.
	\end{equation}
	Using \eqref{p21} and \eqref{p29}, we arrive at \eqref{S11} and \eqref{S12}.
	
	To prove \eqref{s79d}, set $k=3$, $n=7$, and $m=3$, in Lemma \ref{S29}, we get 
	\begin{equation}\label{p30}
	r'_{7,9}=r'_{3,21}r'_{3,7/3}.
	\end{equation}
	Using the fact that $r'_{3,n}r'_{3,1/n}=1$, along with equations \eqref{S11} and \eqref{S12}, we arrive at \eqref{s79d}. 
\end{proof} 
\begin{theorem}
	\begin{eqnarray}
	&&r_{7,21}= \left\{\frac{7\times 2^{1/3} 7^{1/6}+5\sqrt{7}+2\times2^{2/3}7^{5/6}}{3}\right\}^{1/2}\left\{\frac{\sqrt{147+a}+\sqrt{111+a}}{6}\right\}^{1/2}, \label{S13}\\
	&&r_{7,7/3}=\left\{\frac{7\times 2^{1/3} 7^{1/6}+5\sqrt{7}+2\times2^{2/3}7^{5/6}}{3}\right\}^{1/2}\left\{\frac{\sqrt{147+a}-\sqrt{111+a}}{6}\right\}^{1/2}, \label{S14}\\
	&&r_{3,49}=\frac{\sqrt{147+a}+\sqrt{111+a}}{6}, \label{S15}
	\end{eqnarray}
	where\,\,$a=21\times7^{2/3}2^{1/3}+37\times7^{1/3}2^{2/3}.$
\end{theorem}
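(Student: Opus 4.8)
The plan is to derive the three values in the order \eqref{S15}, then \eqref{S13}--\eqref{S14}, exploiting that $r_{3,49}$, $r_{7,21}$ and $r_{7,7/3}$ are linked by the transformation laws of Lemma \ref{S29}.

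First, for $r_{3,49}$ I would substitute the parameterization \eqref{rkn} with $k=3$ into \eqref{deg3} --- equivalently, use \eqref{deg3r} --- at $n=1$. Then $q=e^{-2\pi/\sqrt3}$ and $r_{3,1}=1$ (Lemma \ref{S27}(i) with $n=1$), so the identity collapses to a single relation in $t:=r_{3,49}$, namely $3\sqrt3\lb(t^3+t^{-3}\rb)=t^4-7t^2+7t^{-2}-t^{-4}$. Setting $g:=t^2+t^{-2}$ and using $t+t^{-1}=\sqrt{g+2}$, $t^2-t^{-2}=\sqrt{g^2-4}$ (valid since $r_{3,49}>1$), the right-hand side equals $\sqrt{g^2-4}\,(g-7)$ and the left-hand side equals $3\sqrt3\,\sqrt{g+2}\,(g-1)$; cancelling $\sqrt{g+2}$ and squaring yields the cubic $g^3-43g^2+131g-125=0$. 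The relevant Cardano discriminant of this cubic turns out to be a perfect square, so its unique real root is expressible with real cube roots (it involves $\sqrt[3]{28}$ and $\sqrt[3]{98}$); it is the root with $g>2$, matching $r_{3,49}>1$, and solving the quadratic $t^2+t^{-2}=g$ for the root $t>1$ gives \eqref{S15}.

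Next I would record, from Lemma \ref{S29}(i) with $k=7$, $n=7$, $m=3$ and the symmetry $r_{k,n}=r_{n,k}$ of Lemma \ref{S27}(ii), that $r_{7,7/3}=r_{21,7}/r_{49,3}=r_{7,21}/r_{3,49}$, i.e. $r_{7,21}/r_{7,7/3}=r_{3,49}$, which is now a known number. Then, substituting \eqref{rkn} with $k=7$ into \eqref{deg7} --- equivalently, using \eqref{deg7r} --- at $n=7/3$ (so that $9n=21$) gives
\[
\sqrt7\lb(r_{7,7/3}\,r_{7,21}+\frac1{r_{7,7/3}\,r_{7,21}}\rb)=\lb(\frac{r_{7,21}}{r_{7,7/3}}\rb)^2-3+\lb(\frac{r_{7,7/3}}{r_{7,21}}\rb)^2.
\]
By the previous identity the right-hand side is the known quantity $r_{3,49}^2+r_{3,49}^{-2}-3$, so with $R:=r_{7,21}\,r_{7,7/3}$ this reduces to $\sqrt7\lb(R+R^{-1}\rb)=r_{3,49}^2+r_{3,49}^{-2}-3$; solving the resulting quadratic for the root $R>1$ produces the stated value of $r_{7,21}\,r_{7,7/3}$. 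Finally, combining $r_{7,21}\,r_{7,7/3}=R$ with $r_{7,21}/r_{7,7/3}=r_{3,49}$ gives $r_{7,21}=\sqrt{R\,r_{3,49}}$ and $r_{7,7/3}=\sqrt{R/r_{3,49}}$, which after simplification of surds become \eqref{S13} and \eqref{S14}.

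The main obstacle is the first step: reducing \eqref{deg3} at $n=1$ to a cubic in $r_{3,49}^2+r_{3,49}^{-2}$ and then running Cardano cleanly --- recognizing the cube roots that surface and repackaging them into the nested radicals of \eqref{S15}, all while tracking which real root (and which sign when extracting square roots) corresponds to the quotients exceeding $1$. The surd manipulation at the end --- checking that $\sqrt{R\,r_{3,49}}$ and $\sqrt{R/r_{3,49}}$ indeed coincide with the displayed products of radicals in \eqref{S13}--\eqref{S14} --- is routine but laborious.
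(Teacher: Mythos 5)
Your argument is sound, but it runs in the opposite direction to the paper's and bypasses the paper's own machinery. The paper first specializes its new mixed modular equation \eqref{w7} at $n=1/21$, which (using $r_{7,n}r_{7,1/n}=1$) yields the cubic $r^3-5\sqrt7\,r^2-7r-7\sqrt7=0$ for the product $r=r_{7,21}r_{7,7/3}$ directly; it then feeds that value into \eqref{deg7r} at $n=1/21$ to extract $r_{7,21}r_{7,3/7}$, recovers $r_{7,21}$ and $r_{7,7/3}$ from the two products, and only at the end obtains $r_{3,49}=r_{7,21}r_{7,3/7}$ from Lemma \ref{S29}. You invert this: you get $r_{3,49}$ first from the classical equation \eqref{deg3r} at $n=1$ (your reduction to $g^3-43g^2+131g-125=0$ for $g=r_{3,49}^2+r_{3,49}^{-2}$ is correct, the cubic has a unique real root, and Cardano gives $g=\tfrac{1}{3}\lb(43+13\cdot 28^{1/3}+8\cdot 98^{1/3}\rb)$), then use Lemma \ref{S29} to identify $r_{7,21}/r_{7,7/3}=r_{3,49}$, and finally obtain the product from \eqref{deg7r} at $n=7/3$, which collapses to the quadratic $\sqrt7\lb(R+R^{-1}\rb)=g-3$. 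Both chains check out numerically; the trade-offs are that your route uses only Ramanujan's classical equations \eqref{deg3} and \eqref{deg7} together with Yi's transformation laws and never touches \eqref{w7} (more self-contained, but it does not serve the section's purpose of exhibiting an application of the new degree-21 equation), and that your final step requires denesting $\sqrt{((g-3)/\sqrt7)^2-4}$ with $g$ containing cube roots, whereas the paper gets $R$ straight from its own cubic. One concrete payoff of your computation: your $g$ equals $(129+a)/9$ with $a=24\times 7^{2/3}2^{1/3}+39\times 7^{1/3}2^{2/3}$, whereas the theorem states $a=21\times 7^{2/3}2^{1/3}+37\times 7^{1/3}2^{2/3}$; numerically your $a$ gives $r_{3,49}\approx 6.3057$, in agreement with a direct evaluation of \eqref{rkn}, while the stated $a$ gives $\approx 6.1277$. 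So the constant $a$ in \eqref{S13}--\eqref{S15} appears to be misprinted, and carrying your route to completion would supply the corrected value.
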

\begin{proof}
	Employing \eqref{rkn} in \eqref{w7} with $n=1/21$ and recalling the fact that $r_{7,n}r_{7,1/n}=1$, we obtain
	\begin{equation}\label{p24}
	r^3-5\sqrt{7}r^2-7r-7\sqrt{7}=0, \,\,\text{where}\,\,r:=r_{7,21}r_{7,7/3}. 
	\end{equation}
	Since $r>1$, hence solving the above cubic equation, we get
	\begin{equation}\label{p25}
	r_{7,21}r_{7,7/3}=\frac{7\times 2^{1/3} 7^{1/6}+5\sqrt{7}+2\times2^{2/3}7^{5/6}}{3}. 
	\end{equation}
	Now set $n=1/21$ in \eqref{deg7r} and using \eqref{p25}, we deduce that
	\begin{equation}\label{p26}\begin{split}
	&(42x^2+(14\times7^{5/6}4^{1/3}+35\times7^{1/6}4^{2/3}+14\sqrt{7}+42)x\sqrt 3+1)\\&(42x^2-(14\times7^{5/6}4^{1/3}+35\times7^{1/6}4^{2/3}+14\sqrt{7}-42)x\sqrt 3+1)=0, 
	\end{split}
	\end{equation}
    where\,\,$x=r_{7,21}r_{7,3/7}$. 
	Since $x>1$, hence solving the second factor of the equation \eqref{p26}, we find that
	\begin{equation}\label{p27}
	r_{7,21}r_{7,3/7}=\frac{\sqrt{147+a}+\sqrt{111+a}}{6}, \text{where}\,\,a=21\times7^{2/3}2^{1/3}+37\times7^{1/3}2^{2/3}.
	\end{equation}
	Using \eqref{p25} and \eqref{p27}, we obtain \eqref{S13} and \eqref{S14}. 
	
	To prove \eqref{S15}, set $m=7,$ $n=3$, $k=7$ in Lemma \ref{S27}, we get 
	\begin{equation}\label{p28}
	r_{3,49}=r_{7,7/3}r_{7,1/21}.
	\end{equation}  
	Using \eqref{p28}, \eqref{S13} and \eqref{S14}, we arrive at \eqref{S15}. Hence the proof.
\end{proof}
%
%
%


\begin{thebibliography}{}
\bibitem{ABCV}
T. Anusha, E. N. Bhuvan, S. Cooper and K. R. Vasuki, Elliptic integrals and Ramanujan-type series for $1/\pi$ associated
with $\Gamma_0(N)$, where N is a product of two small primes, J. Math. Anal. Appl., 472 (2019), 1551--1570.
\bibitem{BCB1}
 B. C. Berndt, Ramanujan's Notebooks, Part III, Springer-Verlag, New York, 1991.
 \bibitem{BCB2} B. C. Berndt, Ramanujan's Notebooks, Part IV, Springer-Verlag, New York, 1994.
\bibitem{BCB3} B. C. Berndt, Ramanujan's Notebooks, Part V, Springer-Verlag, New York, 1997.
\bibitem{BAM1}
S. Bhargava, C. Adiga, and M. S. Mahadeva Naika, A new class of modular equations in Ramanujan’s alternative theory of elliptic function of signature 4 and some new $P$-$Q$ eta--function identities. Indian J. Math., 45 (1) (2003), 23--39.
\bibitem{MSMCKSHM1}
M. S. Mahadeva Naika, S. Chandankumar and  M. Harish, On some $P$--$Q$ mixed modular equations of degree 5, Proc. Jangjeon Math. Soc., 23 (3) (2020), 305--320.
\bibitem{MSMCKSHM2}
M. S. Mahadeva Naika, S. Chandankumar and M. Harish,
	On some new $P$--$Q$ mixed modular equations. Ann. Univ. Ferrara Sez. VII Sci. Mat., 59 (2) (2013), 353--374.
\bibitem{MSMCKSBH2}
M. S. Mahadeva Naika, S. Chandankumar and B. Hemanthkumar, New Modular relations for cubic class invariants, Note Mat., 34 (2) (2014), 75--89.
\bibitem{MSMCKSMM}
M. S. Mahadeva Naika, S. Chandankumar and M. Manjunatha,
	On some new modular equations of degree 9 and their applications, Adv. Stud. Contemp. Math., 22 (1) (2012), 99--115.
\bibitem{MSMCKSBH3}
M. S. Mahadeva Naika, B. Hemanthkumar and S. Chandankumar, New $P$--$Q$ mixed modular equations and their applications, Ann. Univ. Ferrara Sez. VII Sci. Mat., 64 (2) (2018), 407--425.
\bibitem{SR2}
 S. Ramanujan, Notebooks (2 volumes), Tata Institute of Fundamental Research, Bombay, 1957.
 \bibitem{SRLN}
 S. Ramanujan, The Lost Notebook and Other Unpublished Papres, Narosa, New Delhi, 1988.
\bibitem{jy3}
J. Yi, Construction and application of modular equations, Ph.D thesis, University of Illinois at Urbana-Champaign, 2004.
\end{thebibliography}
\end{document}